\newcommand{\levy}{L\'{e}vy }
\newcommand{\p}{{\mathbb P}}
\newcommand{\e}{{\mathbb E}}
\newcommand{\D}{{\mathrm d}}
\newcommand{\R}{{\mathbb R}}
\renewcommand{\a}{\alpha}
\renewcommand{\b}{\beta}
\newcommand{\ind}[1]{\mbox{\rm\large  1}_{\{#1\}}}
\newcommand{\ep}{\varepsilon}
\newcommand{\epp}{{(\varepsilon)}}
\newcommand{\vaguely}{\stackrel{v}{\rightarrow}}
\newcommand{\X}{{\widehat X}}
\DeclareMathOperator\RV{RV}
\DeclareMathOperator\VAR{var}
\newcommand{\ee}{{\mathrm e}}
\newcommand{\pn}{{\p^{(n)}}}
\newcommand{\Xn}{{X^{(n)}}}
\newcommand{\Oh}{{\mathrm{O}}}
\newcommand{\oPi}{{\overline \Pi}}
\newcommand{\eqd}{{\stackrel{{\rm d}}{=}}}
\newtheorem{theorem}{Theorem}
\newtheorem{proposition}{Proposition}
\newtheorem{corollary}{Corollary}
\newtheorem{lemma}{Lemma}
\newtheorem{remark}{Remark}
\begin{document}

\title[Zooming in on the supremum]{Zooming in on a L\'evy process at its supremum}
\author[J. Ivanovs]{Jevgenijs Ivanovs}
\address{Aarhus University}

\begin{abstract}
Let $M$ and $\tau$ be the supremum and its time of a L\'evy process $X$ on some finite time interval. It is shown that zooming in on $X$ at its supremum, that is, considering $((X_{\tau+t\ep}-M)/a_\ep)_{t\in\mathbb R}$ as $\ep\downarrow 0$, results in $(\xi_t)_{t\in\mathbb R}$ constructed from two independent processes having the laws of some self-similar L\'evy process $\X$ conditioned to stay positive and negative. This holds when $X$ is in the domain of attraction of $\X$ under the zooming-in procedure as opposed to the classical zooming out of~\cite{lamperti}. As an application of this result we establish a limit theorem for the discretization errors in simulation of supremum and its time, which extends the result of~\cite*{asmussen_glynn_pitman1995} for the Brownian motion. 
Additionally, complete characterization of the domains of attraction when zooming in on a \levy process at~0 is provided.
\end{abstract}

\subjclass[2010]{Primary 60G51, 60F17; secondary 60G18, 60G52}
\keywords{conditioned to stay positive, discretization error, domains of attraction, Euler scheme, functional limit theorem, high frequency statistics, invariance principle, scaling limits, self-similarity, small-time behaviour}
\maketitle

\section{Introduction}\label{sec:intro}
The law of the supremum of a \levy process $X$ over a fixed time interval $[0,T]$ plays a key role in various areas of applied probability such as risk theory, queueing, finance and environmental since, to name a few. In particular, it is closely related to first passage (ruin) times, as well as to the distribution of the reflected (queue workload) process. 
Furthermore, this law is essential in pricing path-dependent options such as lookback and barrier options~\citep{broadie_glasserman_kou_old}. 
There are only few examples, however, where the law of the supremum is available in explicit form. More examples are known when $T$ is an independent exponential random variable, see, e.g.,~\citep{lewis_mordecki} and~\citep{kuznetsov_WH}, but this essentially corresponds to taking Laplace transform over time horizon~$T$.
For various representations and estimates of the law of the supremum see the works of~\cite{chaumont_supremum,suprema_bounds,palmowski_supremum,subordinatedBM}
and references therein.

An obvious way to evaluate the law of the supremum is to perform Monte Carlo simulation using a random walk approximation of the \levy process. In other words, the \levy process is simulated on a grid with a small fixed time increment $\ep>0$ which, of course, assumes that $X_{\ep}$ can be simulated efficiently. Even though alternative simulation methods exist~\citep{kyprianou_simulation}, we focus on this obvious discretization scheme and aim at characterizing the limiting behaviour of the discretization or monitoring error. Further motivation comes from the fact that discrete-time models may be more natural in practice, whereas related continuous-time models may admit an explicit solution, see~ \citep{broadie_glasserman_kou} considering such approximations of discrete-time option payoffs. Finally, this setup is consistent with the influential field of high frequency statistics where it is assumed that an It\^o semimartingale is observed at equidistant times with time lag tending to zero~\citep{jacod_protter}.

Define the supremum of $X$ and its discretized counterpart
\[M:=\sup\{X_t:t\in[0,T]\},\qquad M_\ep:=\max\{X_{i\ep}:i=0,\ldots,\lfloor T/\ep\rfloor\}\] and let $\Delta_\ep=M-M_\ep\geq 0$ be the discretization error. The (last) times of the supremum and the maximum are denoted by $\tau$ and $\tau_\ep$, respectively.
In the case when $X$ is a Brownian motion with variance $\sigma^2$ and drift~$\gamma$,~\cite{asmussen_glynn_pitman1995} showed the following weak convergence:
\begin{equation}\label{eq:AGP}\Delta_\ep/(\sigma\sqrt\ep)\Rightarrow V,\qquad \text{ as }\ep\downarrow 0,\end{equation}
where $V$ is defined using two independent copies of a 3-dimensional Bessel process and an independent uniform time shift.
It is intuitive that~\eqref{eq:AGP} continues to hold if $X$ is replaced by an independent sum of a Brownian motion and a compound Poisson process, which is indeed true as shown by~\cite{dia_lamberton2011}.
Despite numerous follow-up works and importance of~\eqref{eq:AGP} in various applications, the limiting behaviour of $\Delta_\ep$ is not known for a general L\'evy process~$X$. In fact, most of the related works are concerned with asymptotic expansions of the expected error~$\e \Delta_\ep$, see~\citep{hanssen_leeuwaarden}, \citep{dia_thesis}, \citep{chen_thesis} and  \citep{dia_lamberton2011}.

In this paper we establish a functional limit theorem for 
$(X_{\tau+t\ep}-M)/a_\ep$, where $a_\ep>0$ and $\ep\downarrow 0$, on the Skorokhod space of two-sided paths, which corresponds to zooming in on the \levy process $X$ at its supremum, see Theorem~\ref{thm:zooming}. The limit process $\xi$ for positive times has the law of a certain self-similar \levy process $\X$ conditioned to be negative, whereas for negative times it is the negative of $\X$ conditioned to be positive. It is required for this limit theorem that $X$ is in the domain of attraction of $\X$ (with a scaling function $a_\ep$) under the zooming-in procedure as opposed to the classical zooming-out of~\cite{lamperti}. It is noted that zooming-in and zooming-out domains are very different, and the former is determined by the behaviour of $X$ at~0, see Theorem~\ref{thm:domains}. Finally, a general version of~\eqref{eq:AGP} is provided in Theorem~\ref{thm:main} which additionally includes the scaled difference  of suprema times $(\tau-\tau_\ep)/\ep$. In particular, it is shown that~\eqref{eq:AGP} holds whenever the Brownian component is present, i.e.~$\sigma>0$ in the L\'evy-Khintchine formula~\eqref{eq:psi}.

Let us briefly discuss some additional related literature. In the study of extremes of Gaussian processes~\citep{piterbarg} it is standard to assume that the process of interest locally behaves as a fractional Brownian motion or, more generally, as a self-similar centered Gaussian process. In the context of \levy processes, \cite{barczy_bertoin} obtained a somewhat related functional limit theorem by starting the process (with a negative drift) at $x\rightarrow -\infty$, conditioning on having a positive supremum, and shifting at the instant of the supremum. 
Finally, it is noted that our problem does not fit into the standard framework of high frequency statistics~\citep{jacod_protter}, because the discretization error $\Delta_\ep$ can not be easily retrieved from the difference of $X$ and its discretized version. 

This paper is organized as follows. \S\ref{sec:prelim} is devoted to preliminaries on \levy processes, self-similar processes, processes conditioned to stay negative, as well as post-supremum processes. In \S\ref{sec:lamperti} we present the result of~\cite{lamperti} but for zooming in instead of zooming out, and then specialize to the case of \levy processes. Complete characterization of the respective domains of attraction together with some noteworthy examples is given in~\S\ref{SEC:DOMAINS}.
A general invariance principle for \levy processes conditioned to stay negative is stated in~\S\ref{sec:invariance}, and the main results of this paper are given in~\S\ref{sec:zooming}. Appendices contain proofs of the results from \S\ref{SEC:DOMAINS} and~\S\ref{sec:invariance}, which are partly known in the literature.


\section{Preliminaries}\label{sec:prelim}
\subsection{Regular variation}
We write $f\in\RV_\a,\a\in\mathbb R$ and say that $f$ is regularly varying at~0 with index~$\a$ if $f$ is a positive measurable function on $(0,\delta)$ for some $\delta>0$ such that $f(x\ep)/f(\ep)\to x^{\a}$ as $\ep\downarrow 0$ for all $x>0$, see~\citep{bingham_regular}. If $f\in\RV_\a$ then $F(t)=f(1/t)$ is regularly varying at $\infty$ with index $-\a$, which allows to convert results from one setting to another. Throughout this paper we consider regular variation at 0 unless specified otherwise.

\subsection{Canonical notation}\label{sec:canonical}
Let $\Omega$ be the set of two-sided paths $\omega:\mathbb R\mapsto\mathbb R\cup\{\dagger\}$ such that 
\[\omega_t=\begin{cases}\omega'_t,&\text{ for }t\in[a,b),\\
\dagger,&\text{ otherwise},\end{cases}\]
for some $a\leq b$ and a two-sided c\'adl\'ag path $\omega':\mathbb R\mapsto\mathbb R$. It will be assumed that $\R\cup\{\dagger\}$ is one-point compactification of the real line, i.e., $\dagger$ is the point at infinity. Furthermore, it is convenient to assume that any algebraic operation involving~$\dagger$ results in~$\dagger$, i.e., $\dagger-x=\dagger$. For a usual path defined on $[0,\infty)$ we put $\omega_t=0$ for all~$t<0$ which will be convenient in the following. Additionally, we may want to terminate the path $\omega$ at some non-negative time $T$, and then we put $\omega_t=\dagger$ for all $t\geq T$. .

We equip $\Omega$ with the extended Skorokhod $J_1$ topology~\citep{whitt}, so that a sequence of two-sided paths converges to some $\omega\in\Omega$ if the restrictions to $[a,b]$ converge for all $a<b$ such that $a,b$ are the continuity points of~$\omega$. We let $X$ be the canonical process: $X_t(\omega)=\omega_t$, and let $\p$ be a probability measure on $\Omega$ with its Borel $\sigma$-algebra $\mathcal F$ under which $(X_t)_{t\geq 0}$ is a \levy process adapted to a usual filtration $(\mathcal F_t)_{t\geq 0}$.
Additionally, we write $\p_x$ for the law of this process issued from~$x$. We say that $X$ is b.v.\ (ub.v.) if $\p$-almost all paths of $X$ are of bounded (unbounded) variation on compacts. 

\subsection{\levy processes}\label{sec:levy}
Consider a \levy process $(X_t)_{t\geq 0}$ and let $\psi(\theta)$ be its \levy exponent: $\e \ee^{\theta X_t}=\ee^{\psi(\theta)t},t\geq 0$ for at least purely imaginary~$\theta$. Standard textbooks on this topic are~\citep{bertoin,sato,kyprianou}.
 The L\'evy-Khintchine formula states that
\begin{equation}\label{eq:psi}\psi(\theta)=\gamma\theta+\frac{1}{2}\sigma^2\theta^2+\int_{\mathbb R}\left(\ee^{\theta x}-1-\theta x\ind{|x|<1}\right)\Pi(\D x),\end{equation}
where $\gamma\in\mathbb R,\sigma\geq 0$ and $\Pi(\D x)$ is a Radon measure on $[-\infty,0)\cup(0,\infty]$ satisfying $\int_{\mathbb R} (x^2\wedge 1)\Pi(\D x)<\infty$. When $\int_{-1}^1 |x|\Pi(\D x)<\infty$ this formula can be rewritten as
\begin{equation}\label{eq:psi_1}\psi(\theta)=\gamma'\theta+\frac{1}{2}\sigma^2\theta^2+\int_{\mathbb R}\left(\ee^{\theta x}-1\right)\Pi(\D x),\end{equation} which corresponds to an independent sum of a drifted Brownian motion with mean $\gamma'$ and variance $\sigma^2$, and a pure jump b.v.\ process. 

Throughout this work we exclude the trivial process which is equal to 0 identically. 
Concerning the behaviour of $X$ for large $t$, we recall that only the following three possibilities can occur as $t\rightarrow\infty$: (i) $X_t\rightarrow\infty$, (ii) $\liminf_t X_t=-\infty$ and $\limsup_t X_t=\infty$, (iii) $X_t\rightarrow-\infty$ a.s., where in case (ii) we say that $X$ \emph{oscillates}. 


Often it is convenient to consider a \levy process $X$ \emph{killed} (sent to $\dagger$) at an independent exponential time $e_q$ of rate~$q>0$. This is the only way of killing which preserves stationarity and independence of increments, and so it leads to a natural generalization of a \levy process. We often keep $q\geq 0$ implicit, but write $\p^q,\psi^q$ when it is necessary to stress that the corresponding \levy process is killed at rate~$q$. The L\'evy-Khintchine formula~\eqref{eq:psi} is extended to killed \levy processes by putting $\psi^q(\theta)=\psi(\theta)-q$ so that $\e^q(e^{\theta X_t};X_t\neq \dagger)=e^{\psi^q(\theta)t}.$

Finally, we define the overall supremum and its (last) time:
\[\overline X:=\sup_{t\geq 0}\{X_t:X_t\neq \dagger\},\qquad \overline G:=\sup\{t\geq 0:X_t=\overline X\text{ or }X_{t-}=\overline X\},\] 
so that $\overline G=\infty$ when $\overline X=\infty$. The latter occurs when $X$ drifts to $\infty$ or oscillates, in which case $X$ must be non-killed. Additionally, we let $\underline X:=\inf_{t\geq 0}\{X_t:X_t\neq \dagger\}$ to denote the overall infimum.

\subsection{Self-similar processes}\label{sec:self_similar}
A process $(X_t)_{t\geq 0}$ is called \emph{self-similar with index $H>0$} if for all $u>0$ it holds that 
\begin{equation}\label{eq:self_similar}(X_{ut})_{t\geq 0}\eqd(u^H X_{t})_{t\geq 0},\end{equation} 
and in particular $X_0=0$ a.s. The index $H$ is unique when $X$ is not identically $0$ or~$\dagger$; both are said to be trivial in the following. Standard textbook references are~\cite[Ch.\ 7]{samorodnitsky_taqqu} and~\citep{embrechts_selfsimilar}.

Suppose that $X$ is a non-trivial self-similar \levy process then necessarily $\a:=1/H\in(0,2]$ and $q=0$ (no killing). 
The following is an exhaustive list of self-similar \levy processes:
\begin{itemize}
\item[(i)] Brownian motion: $\gamma=0,\sigma>0,\Pi=0$, in which case~$\a=2$;
\item[(ii)] Linear drift process: $\gamma\neq 0,\sigma=0,\Pi=0$, in which case $\a=1$;
\item[(iii)] Strictly $\a$-stable \levy process for $\a\in(0,2)$: $\sigma=0,$
\begin{equation}\label{eq:psi_stable}\Pi(\D x)=\ind{x>0}c_+x^{-1-\a}\D x+\ind{x<0}c_-|x|^{-1-\a}\D x\end{equation}
for some $c_\pm\geq 0,c_++c_->0$, and, additionally, 
\begin{align*}
\gamma=(c_+-c_-)/(1-\a)\qquad\text{if }\a\neq 1,\\
c_+=c_-,\qquad\text{if }\a=1,\nonumber
\end{align*}
see~\cite[Thm.\ 14.7 (iv)--(vi)]{sato}.
\end{itemize}
The linear drift process in (ii) is often excluded from consideration. This simple process, however, is needed for completeness of the limit theory presented in Theorem~\ref{thm:lamperti}, see also Remark~\ref{rem:trivial_degenerate}. It is not always possible to subtract a linear drift to get another (stable) limit process, see~\S\ref{sec:ex_stable}. Furthermore, in our application to the study of supremum such a transformation would completely change the problem.

Suppose $X$ is a self-similar \levy process which is not a linear drift process. Then $X$ is b.v.\ if and only if $\a\in(0,1)$, in which case we may use the representation~\eqref{eq:psi_1} with~$\gamma'=0$ and $\sigma=0$. In particular, if $X$ is monotone then necessarily $\a<1$, and so it is a pure jump process with all the jumps of the same sign.
Finally, if $X$ is not monotone then the point 0 is regular for $(-\infty,0)$ and $(0,\infty)$, see~\cite[Thm.\ 6.5]{kyprianou}. In this case, by self-similarity, the process $X$ must be oscillating and so $\overline X=\infty$ and $\underline X=-\infty$.

\subsection{Processes conditioned to stay negative}
For any $x<0$ we may define the law of a \levy process $X$ started in $x$ and conditioned to stay negative:
\[\p_x^\downarrow(\cdot):=\p_x(\cdot|\overline X<0)\]
unless $\p(\overline X=\infty)=1$, because then we would condition on the event of zero probability. In general, we first consider a killed process and then take the limit:
\begin{equation}\label{eq:p_uparrow}\p_x^\downarrow(B):=\lim_{q\downarrow 0}\p_x^q(B|\overline X<0)\end{equation}
for all $B\in\mathcal F_T,T\in[0,\infty)$,
which defines a probability law~\citep{chaumont}. 
It is well known that the process under $\p_x^\downarrow$ is a Markov process on $(-\infty,0)$ with a Feller semigroup, say $p^\downarrow_t(x,\D y)$.
This process has infinite life time if and only if the original \levy process $X$ satisfies $\underline X=-\infty$, i.e.\ $X$ either drifts to $-\infty$ or oscillates. Finally, it is standard to express the semigroup $p^\downarrow_t(x,\D y)$ as Doob's h-transform of $X$ killed at the entrance time into $[0,\infty)$, see~\eqref{eq:doob_h} for the precise expression.

It is crucial to take the limit in~\eqref{eq:p_uparrow} along independent exponential times, that is, the limit of conditioned killed \levy processes, because deterministic times may result in a different limit law. In particular, when $X\rightarrow \infty$ the life time of the process under $\p_x^\downarrow$ is finite, whereas deterministic times necessarily lead to an infinite lifetime if the corresponding limit law exists, see also~\citep{hirano_conditioning}.
When $X$ oscillates, we may alternatively condition on $X$ exiting $(-y,0)$ through $-y$ and then letting $y\rightarrow\infty$, see~\cite[Rem.\ 1]{chaumont}. Finally, according to~\cite[Rem.\ 1]{chaumont_96}, for a non-monotone self-similar \levy process we may also take the limit along deterministic times:
\[\p_x^\downarrow(B)=\lim_{t\rightarrow \infty}\p_x(B|X_s<0\,\forall s\in[0,t]).\]



\subsection{Post-supremum processes}\label{sec:post_sup}
Unless $\overline X=\infty$ we consider the post-supremum process $(X_{\overline G+t}-\overline X)_{t\geq 0}$, and denote its law by~$\p^\downarrow$ (there is no subscript as compared to the conditional law $\p^\downarrow_x$). In general, we consider $X$ on a finite time interval $[0,T]$ and the corresponding post-supremum process. Then we take $T\rightarrow\infty$ to define the law~$\p^\downarrow$, see~\citep{bertoin_splitting}, where it is also shown that 
the process under $\p^\downarrow$ is Markov with transition semigroup $p^\downarrow_t(x,\D y)$ for any $x,y<0$ and $t\geq 0$. This explains the notation for the law of the post-supremum process; moreover $\p^\downarrow$ is also called the law of $X$ conditioned to stay negative. 
If $X$ is such that 0 is regular for $(-\infty,0)$ then the process under $\p^\downarrow$ starts at~0 and leaves it immediately, but otherwise it starts at a negative value having a certain distribution, see~\citep{chaumont}. In the latter case the post-supremum process may also be identically $\dagger$ with positive probability. 

It should be noted that some of the cited results are stated for non-killed processes, but their extension to killed \levy processes is straightforward.
Furthermore, in the analogous way we define the laws $\p^\uparrow_x,x>0$ and $\p^\uparrow$ corresponding to the \levy process conditioned to stay positive and the post-infimum process, respectively; one may easily obtain these laws by considering $-X$. 

In this paper we will focus on a self-similar \levy process~$\X$ with law $\p$ arising as a weak limit when zooming in on~$X$. Recall that such $\X$ oscillates when non-monotone and hence both $\widehat\p^\uparrow$ and $\widehat\p^\downarrow$ are defined as the limit laws of finite time post-infimum and post-supremum processes, respectively. Furthermore, even for non-oscillating non-killed process one of the above laws is defined as a limit.


\section{The result of Lamperti for zooming in}\label{sec:lamperti}
\subsection{Zooming out -- the classical theory}\label{sec:zooming_out}
Consider an arbitrary stochastic process $X$, and assume that $(X_{\eta t}/a_\eta)_{t\geq 0}$ has a stochastically continuous, non-trivial limit $\X$ as $\eta\rightarrow\infty$ for some scaling function $a_\eta>0$, in the sense of convergence of finite dimensional distributions. 
\cite{lamperti} showed that necessarily $\X$ is a self-similar processes, see \S\ref{sec:self_similar}.
In fact, \cite{lamperti} considered a more general scaling of the form $X_{\eta t}/a_\eta+b_\eta$ while assuming that $\X_t$ is non-degenerate for every~$t$. In that case $b_\eta\to b$ and so one may as well drop $b_\eta$ which would still result in a stochastically continuous limit process. 

The above rescaling may be seen as zooming out on the process~$X$, and a classical example is the generalized Donsker's theorem, where $X_t=\sum_{i=1}^{\lfloor t\rfloor}\zeta_i$ for an i.i.d.\ sequence of random variables~$\zeta_i$, see e.g.~\cite[Ch.\ 4]{whitt_book}. In this case all the possible non-trivial limits of $(X_{\eta t}/a_\eta)_{t\geq 0}$ are given by the class of self-similar \levy processes $\X$ with the necessary and sufficient condition~\cite[Thm.\ 16.14]{kallenberg}  being
\begin{equation}\label{eq:sums}\sum_{i=1}^n\zeta_i/a_n\Rightarrow \X_1.
\end{equation} Strict domains of attraction, when the index of stability is different from~1, can be obtained from non-strict  domains characterized in~\cite[Thm.\ 7.35.2]{gnedenko_kolmogorov}, but see also~\cite[Thm.\ 8.3.1]{bingham_regular} and comments following it. The case of strictly $1$-stable law is substantially different and its complete analysis can be found in a rather unknown work of~\cite{shimura}. Finally, characterization of the strict domain of attraction to a non-zero constant is required for the complete picture, see Remark~\ref{rem:trivial_degenerate}. Such result is stated in Appendix~\ref{app:classical}, but see also~\cite[Thm.\ VII.7.3]{feller} for the case of positive random variables.

\subsection{Zooming in}
In this paper, however, we are interested in the opposite scaling of time and space, that is, in zooming in on the process~$X$:
\begin{equation}\label{eq:zoom_in}
\left(X_{\ep t}/a_\ep\right)_{t\geq 0}\stackrel{{\rm fd}}{\Rightarrow} (\X_t)_{t\geq 0}\qquad \text{ as }\ep\downarrow 0,
\end{equation}
and the convergence is in the sense of finite dimensional distributions. 
 Surprisingly, to the best of author's knowledge, this regime has not been properly addressed in the literature. 
By a slight adaptation of the arguments in~\citep[Thm.\ 2]{lamperti}, but see also~\cite[Thm.\ 8.5.2]{bingham_regular}, we get the following result.
\begin{theorem}\label{thm:lamperti}
Assume that~\eqref{eq:zoom_in} holds for a stochastically right-continuous, non-trivial process~$\X$.
Then $\X$ is self-similar with some index $H>0$ as defined in~\eqref{eq:self_similar} and $a_\ep\in \RV_H$ as $\ep\downarrow 0$.
\end{theorem}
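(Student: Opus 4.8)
The plan is to run Lamperti's argument in the zooming-in regime, the only change being that $\ep\downarrow0$ plays the role of $\eta\to\infty$. Write $g(\ep):=a_\ep$ and fix $u>0$. I would first produce two descriptions of the rescaled process $(X_{\ep ut}/g(\ep))_{t\ge0}$ as $\ep\downarrow0$. Replacing $\ep$ by $\ep u$ in~\eqref{eq:zoom_in} (legitimate since $\ep u\downarrow0$) gives $(X_{\ep ut}/g(\ep u))_{t\ge0}\stackrel{{\rm fd}}{\Rightarrow}(\X_t)_{t\ge0}$, while evaluating the finite-dimensional convergence in~\eqref{eq:zoom_in} at the times $ut_1,\dots,ut_k$ gives $(X_{\ep ut}/g(\ep))_{t\ge0}\stackrel{{\rm fd}}{\Rightarrow}(\X_{ut})_{t\ge0}$. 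Since $X_{\ep ut}/g(\ep)=r_\ep(u)\,X_{\ep ut}/g(\ep u)$ with $r_\ep(u):=g(\ep u)/g(\ep)$, the whole argument reduces to controlling the ratio $r_\ep(u)$.

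The core is a convergence-of-types step, carried out at a single well-chosen time. Fix $t_0>0$ with $\p(\X_{t_0}\ne0)>0$; such $t_0$ exists because $\X$ is non-trivial. First one checks $0<\liminf_{\ep\downarrow0}r_\ep(u)\le\limsup_{\ep\downarrow0}r_\ep(u)<\infty$ for every $u$: a subsequence along which $r_\ep(u)\to\infty$ destroys the tightness of $X_{\ep ut_0}/g(\ep)=r_\ep(u)X_{\ep ut_0}/g(\ep u)$ (the second factor keeps mass bounded away from $0$), contradicting convergence to $\X_{ut_0}$; a subsequence along which $r_\ep(u)\to0$ forces $X_{\ep ut}/g(\ep)\to0$ in probability for every $t$, hence $\X\equiv0$. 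Next, comparing the two convergences above along two convergent subsequences of $r_\ep(u)$ with limits $\lambda_1,\lambda_2\in(0,\infty)$ and using Slutsky's lemma gives $(\X_{ut})_{t\ge0}\eqd\lambda_i(\X_t)_{t\ge0}$, whence $\X_{t_0}\eqd(\lambda_2/\lambda_1)^k\X_{t_0}$ for every $k\in\mathbb Z$; since $\X_{t_0}$ is not a.s.\ zero this is impossible unless $\lambda_1=\lambda_2$. Therefore $\lambda(u):=\lim_{\ep\downarrow0}r_\ep(u)\in(0,\infty)$ exists, and
\[(\X_{ut})_{t\ge0}\eqd\lambda(u)\,(\X_t)_{t\ge0}\qquad\text{for all }u>0.\]

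The rest is routine. Applying the last relation twice yields $\lambda(uv)\X_{t_0}\eqd\lambda(u)\lambda(v)\X_{t_0}$, hence the multiplicative Cauchy equation $\lambda(uv)=\lambda(u)\lambda(v)$ on $(0,\infty)$, with $\lambda(1)=1$; moreover stochastic right-continuity of $\X$ at $t_0$ together with one more convergence-of-types argument shows $\lambda(u_n)\to1$ as $u_n\downarrow1$, so $\lambda$ is right-continuous and therefore $\lambda(u)=u^H$ for some $H\in\R$. The relation $r_\ep(u)=g(\ep u)/g(\ep)\to u^H$ is precisely the statement $a_\ep\in\RV_H$ at $0$, and the display above is~\eqref{eq:self_similar}. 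Finally $H>0$: for $H<0$ the scaling relation at $t_0$ would contradict tightness of $\X_{ut_0}$ as $u\downarrow0$ (stochastic right-continuity), and for $H=0$ it would force $\X_t=\X_0$ a.s.\ for all $t$, i.e.\ a time-constant process, which together with $X_0=0$ gives $\X\equiv0$, contradicting non-triviality.

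I expect the convergence-of-types step to be the only genuine obstacle: one must extract a subsequence-independent limit of $r_\ep(u)$ in $(0,\infty)$ and, above all, accommodate the possibility that $\X_t$ is degenerate for some or all $t$ — the linear drift process $\X_t=\gamma t$ being precisely this situation — which is why the argument is anchored at a single carefully chosen time $t_0$ and reinforced by the scaling iteration, rather than quoting the classical convergence-of-types theorem directly.
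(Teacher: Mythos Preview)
Your proof is correct and follows precisely the approach the paper indicates, namely Lamperti's classical argument transported to the $\ep\downarrow0$ regime with a Convergence to Types step (the paper's Lemma~\ref{lem:types}) doing the real work. The only slightly loose point is your exclusion of $H=0$, where you invoke $X_0=0$ --- this is not listed among the hypotheses but is implicit in the paper's setting (and indeed without it the theorem would admit the constant-process counterexample $\X_t\equiv X_0/a_0$).
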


Note that $a_\ep\to 0$ and so it must be that $X_0=0$ a.s.
Similarly to the classical case, the more general scaling of the form $(X_{\ep t}+b_\ep)/a_\ep$ is superfluous. It allows for processes $X$ started at some deterministic $x$, but the same can be achieved by simply considering $(X_{\ep t}-x)/a_\ep$. Finally, it should be stressed that Theorem~\ref{thm:lamperti} can be extended by considering the time interval $(0,\infty)$ instead of $[0,\infty)$ in~\eqref{eq:zoom_in}, in which case there is an additional possibility that $a_\ep\in\RV_0$ and $(\X_{ut})_{t>0}\eqd(\X_t+b\log u)_{t>0}$ for some $b\in\R$ and all~$u>0$.

\begin{remark}\label{rem:trivial_degenerate}
In the setting of an arbitrary positive affine scaling one assumes that the limit process is non-degenerate for some~$t>0$, i.e., the distribution of $\X_t$ does not concentrate at a point, see~\cite[Ch.\ 8.5]{bingham_regular}. For the above scaling, however, it is sufficient that the limit process is non-trivial. The reason is that in the corresponding Convergence to Types Lemma~\ref{lem:types} it is only required that one random variable does not concentrate at~0. In particular, the linear drift process is not excluded in the statement of Theorem~\ref{thm:lamperti}.
\end{remark}
\begin{lemma}[Convergence to Types]\label{lem:types}
Suppose that for some $a_n,a'_n>0$ and random variables $X_n,X,X'$,
\[X_n/a_n\Rightarrow X\quad\text{ and }\quad X_n/a'_n\Rightarrow X',\qquad n\to \infty,\] and $\p(X=0)<1$.
Then $a_n/a'_n\to u\in[0,\infty)$ and $X'\eqd uX$.
\end{lemma}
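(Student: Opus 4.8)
The plan is to run the classical convergence-of-types argument through characteristic functions, keeping track of the two places where the usual non-degeneracy hypothesis is weakened to non-triviality. Write $c_n:=a_n/a'_n\in(0,\infty)$, and set $\phi(t):=\e\ee^{\mathrm it X}$, $\phi'(t):=\e\ee^{\mathrm it X'}$; by hypothesis $\psi_n(s):=\e\ee^{\mathrm is X_n/a_n}\to\phi(s)$ and $\e\ee^{\mathrm it X_n/a'_n}\to\phi'(t)$ for every real argument. The one analytic fact I would invoke is that weak convergence forces $\psi_n\to\phi$ \emph{uniformly on compact sets}: tightness of $(X_n/a_n)$ yields equicontinuity of the family $(\psi_n)$ via $|\psi_n(s)-\psi_n(s')|\le\e[2\wedge|s-s'|\,|X_n/a_n|]$, and equicontinuity together with pointwise convergence gives uniformity on compacts.

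First I would show $(c_n)$ is bounded. If not, pass to a subsequence with $c_{n_k}\to\infty$; since $(X_{n_k}/a'_{n_k})$ is tight (it converges in law to $X'$) and $X_{n_k}/a_{n_k}=(X_{n_k}/a'_{n_k})/c_{n_k}$, this ratio tends to $0$ in probability, so $X_{n_k}/a_{n_k}\Rightarrow 0$; but it also converges to $X$, forcing $X=0$ a.s., contradicting $\p(X=0)<1$. Next, along any subsequence with $c_{n_k}\to u\in[0,\infty)$, I would use the identity $\e\ee^{\mathrm it X_{n_k}/a'_{n_k}}=\psi_{n_k}(c_{n_k}t)$ together with the bound $|\psi_{n_k}(c_{n_k}t)-\phi(ut)|\le|\psi_{n_k}(c_{n_k}t)-\phi(c_{n_k}t)|+|\phi(c_{n_k}t)-\phi(ut)|$, where the first term vanishes by uniform convergence on the compact set containing all $c_{n_k}t$ and the second by continuity of $\phi$; hence $\phi'(t)=\phi(ut)$ for all $t$, i.e.\ $X'\eqd uX$.

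It remains to upgrade ``bounded with subsequential limits'' to genuine convergence, i.e.\ to show the subsequential limit is unique. If $c_{n_k}\to u$ and $c_{m_j}\to u'$ with $u,u'\in[0,\infty)$, then $uX\eqd X'\eqd u'X$. Assuming $u\ne u'$, say $u'<u$ (so $u>0$): if $u'=0$ then $uX\eqd 0$ gives $X=0$ a.s.; if $u'>0$ then $X\eqd(u'/u)X$, and iterating gives $X\eqd(u'/u)^nX\to 0$ in probability, again $X=0$ a.s. Either way this contradicts $\p(X=0)<1$, so $u=u'$; being bounded with a unique subsequential limit, $c_n=a_n/a'_n\to u\in[0,\infty)$, and $X'\eqd uX$. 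The only real subtlety — the main obstacle — is that $X'$ is \emph{not} assumed non-degenerate, so the uniqueness-of-limit step must be driven entirely by $\p(X=0)<1$ on the $X$ side, and the boundary case $u=0$ (which genuinely occurs) has to be carried along throughout rather than excluded at the outset as in the textbook statement.
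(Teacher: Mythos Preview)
Your argument is correct and is precisely the adaptation the paper has in mind: the paper's own proof consists solely of the instruction ``Adapt the proofs of~\cite[Thm.\ 2.10.1 and Thm.\ 2.10.2]{gnedenko_kolmogorov}'', and your characteristic-function argument---bounding $c_n=a_n/a'_n$ via tightness, identifying subsequential limits through uniform convergence of $\psi_n$, and extracting uniqueness from $\p(X=0)<1$ alone---is exactly that adaptation carried out in full. You have also correctly isolated the point where the weakened hypothesis matters (uniqueness of the subsequential limit and the admissibility of $u=0$), which is the content of Remark~\ref{rem:trivial_degenerate}.
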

\begin{proof}
Adapt the proofs of~\cite[Thm. 2.10.1 and Thm. 2.10.2]{gnedenko_kolmogorov}. 
\end{proof}
Furthermore, Convergence to Types result implies that if Theorem~\ref{thm:lamperti} holds with another scaling function $a_\ep'>0$ and non-trivial limit process $\X'$ then necessarily 
\begin{equation}\label{eq:types}
a_\ep/a'_\ep\to u\in(0,\infty)\quad\text{ and }\quad (\X'_t)_{t\geq 0}\eqd(u\X_t)_{t\geq 0}.
\end{equation}

\subsection{Zooming in on a \levy process}
Let us specialize~\eqref{eq:zoom_in} to the case when $X$ is a \levy process with the \levy exponent $\psi$. It is clear that stationarity and independence of increments must be preserved by the limit process, and so $\X$ must be a \levy process; its \levy exponent is denoted by~$\widehat\psi$.
Now the convergence in~\eqref{eq:zoom_in} extends to the weak convergence on the Skorokhod space~\cite[Cor.\ VII.3.6]{jacod_shiryaev}, and it is equivalent to 
\begin{equation}\label{eq:psi_convergence}\psi^\epp(\theta)=\ep\psi(\theta/a_\ep)\rightarrow\widehat\psi(\theta),\qquad \text{as }\ep\downarrow 0
\end{equation}
for all purely imaginary~$\theta$, where $\psi^\epp$ is the \levy exponent of the \levy process $X^\epp_t=X_{\ep t}/a_\ep$.
According to Theorem~\ref{thm:lamperti}, if $\X$ is non-trivial then it is $1/\a$-self-similar \levy process, see \S\ref{sec:self_similar}, and $a_\ep\in\RV_{1/\a}$ for some $\a\in(0,2]$. 
Necessary and sufficient conditions for the convergence in~\eqref{eq:psi_convergence} are provided in \S\ref{SEC:DOMAINS}. In this regard it is noted that there exist \levy processes such that no scaling function $a_\ep>0$ satisfies~\eqref{eq:psi_convergence}, i.e., such \levy processes do not have a non-trivial limit under zooming in. A simple example is given by a compound Poisson process. It should be stressed that throughout this paper the limits in~\eqref{eq:zoom_in} and~\eqref{eq:psi_convergence} are assumed to hold for all sequences $\ep_n\downarrow 0$. Alternatively, one may talk about partial attraction by requiring the above for some sequence $\ep_n$ only, see~\cite[\S 37]{gnedenko_kolmogorov} and~\citep{maller_strassen}.

We conclude by a simple but important observation.
\begin{lemma}\label{lem:regularity}
Assume that~\eqref{eq:psi_convergence} holds for some non-trivial $\X$.
If $X$ is such that 0 is irregular for $(-\infty,0)$ or for $(0,\infty)$ then $\X$ must be increasing or decreasing, respectively.
\end{lemma}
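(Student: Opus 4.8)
The plan is to exploit the well-known dichotomy linking regularity of~$0$ for a half-line to a path property (bounded variation together with a sign condition on the drift), and to transport this property to the limit~$\X$ via~\eqref{eq:psi_convergence}. Concretely, suppose $0$ is irregular for $(-\infty,0)$ for~$X$. By Rogozin's criterion (see~\cite[Thm.~6.5, Thm.~6.8]{kyprianou}) this forces $X$ to be of bounded variation with nonnegative drift $\gamma'$ in the representation~\eqref{eq:psi_1}, i.e.\ $\gamma'\geq 0$ and $\int_{-1}^1|x|\,\Pi(\D x)<\infty$. The goal is then to show that the scaling limit~$\X$ inherits $\gamma'\geq 0$ and $\Pi$ supported on $(0,\infty)$, hence is a subordinator, so in particular increasing; the case of irregularity for $(0,\infty)$ follows by applying this to~$-X$.

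First I would record that under the hypothesis of the lemma, Theorem~\ref{thm:lamperti} applies, so $\X$ is a self-similar \levy process with $a_\ep\in\RV_{1/\a}$, $\a\in(0,2]$. Since $\X$ is non-trivial and, by the assumed irregularity of~$X$, a bounded-variation process, one expects $\X$ to be a bounded-variation \levy process as well: indeed, passing to the limit in~\eqref{eq:psi_convergence} with $X$ b.v.\ and controlling the Gaussian and large-jump contributions should yield $\sigma=0$ for~$\X$ and prevent the appearance of an unbounded-variation jump part; consequently $\X$ falls under the classification of \S\ref{sec:self_similar} with $\a\in(0,1)$ (the linear drift case $\a=1$ being possible only when $\X$ is itself a nonnegative or nonpositive drift). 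The substantive point is the sign: I would argue that the one-dimensional convergence $X_{\ep}/a_\ep \Rightarrow \X_1$, combined with $0$ being irregular for $(-\infty,0)$, forces $\X$ not to enter $(-\infty,0)$ immediately, whence by self-similarity and the regularity dichotomy of \S\ref{sec:self_similar} (``if $\X$ is not monotone then $0$ is regular for both half-lines'') $\X$ must be monotone, and monotone in the increasing direction because its small-time behaviour agrees with that of the b.v.\ process $X$ with drift $\gamma'\geq 0$.

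To make the middle step rigorous I would use the ladder-structure characterization: $0$ is irregular for $(-\infty,0)$ for~$X$ iff $\p(\uX_t = 0 \text{ for some } t>0)>0$ locally, equivalently the descending ladder time process has a positive drift / the process ``stays nonnegative for a positive time.'' A clean route is via the pre-limit exponents: $0$ is irregular for $(-\infty,0)$ iff $\int_0^1 \p(X_t<0)\,\frac{\D t}{t}<\infty$ (Rogozin), and since $X^\epp_t = X_{\ep t}/a_\ep$ we have $\p(X^\epp_t<0)=\p(X_{\ep t}<0)$; passing to the limit and using that $\p(\X_t<0)\to\p(\X_1<0 \text{ or } 0)$-type continuity (self-similarity makes $\p(\X_t<0)$ constant in~$t$), one deduces $\p(\X_1<0)=0$, hence $\X$ is nonnegative and therefore, being a \levy process, a subordinator, i.e.\ increasing.

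The main obstacle I anticipate is the interchange of limits needed to pass the regularity integral test (or, equivalently, the immediate-entrance property) from~$X$ to~$\X$: weak convergence of $X^\epp_t$ does not by itself control $\p(X_{\ep t}<0)$ uniformly near $t=0$, so some care with Rogozin's integral criterion, or alternatively a direct argument that the limit of b.v.\ processes with nonnegative drift and one-sided jumps is again of this form, will be required; I would handle it by first establishing $\sigma=0$ and one-sidedness of the limiting \levy measure from~\eqref{eq:psi_convergence} restricted to test functions, then invoking the self-similar classification to conclude monotonicity, and finally fixing the direction from the drift sign.
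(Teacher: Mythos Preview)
Your proposal reaches the right conclusion but is considerably more elaborate than needed, and the paper's argument is a direct three-line path computation that avoids every difficulty you anticipate. The paper simply notes that irregularity of $0$ for $(-\infty,0)$ means that for any $\delta>0$ one can choose $h>0$ with $\p(X_t\geq 0\text{ for all }t\in[0,h])>1-\delta$. Since $X^\epp_t=X_{\ep t}/a_\ep$, this is the same event as $\{X^\epp_t\geq 0\text{ for all }t\in[0,h/\ep]\}$, and $h/\ep\to\infty$. Skorokhod's representation then yields $\p(\X_t\geq 0\text{ for all }t\geq 0)\geq 1-\delta$ for every~$\delta$, so $\X$ is a.s.\ nonnegative and hence increasing; the other half is obtained by applying this to~$-X$.

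Your ``clean route'' actually contains the same observation in one-dimensional form: $\p(X^\epp_t<0)=\p(X_{\ep t}<0)\to 0$ because irregularity gives $\p(X_s<0)\leq\p(\inf_{u\leq s}X_u<0)\to 0$ as $s\downarrow 0$, while weak convergence together with $\p(\X_t=0)=0$ gives $\p(X^\epp_t<0)\to\p(\X_t<0)$, forcing $\p(\X_t<0)=0$ for each $t>0$. Notice there is no interchange of limits here---both displayed limits are the single limit $\ep\downarrow 0$---so the obstacle you flag is a phantom. The detours through Rogozin's integral $\int_0^1\p(X_t<0)\,t^{-1}\D t$, through the bounded-variation classification, and through showing that $\widehat\Pi$ is one-sided buy nothing; in the form you sketch they would also lean on the characterization in Theorem~\ref{thm:domains}, which comes later. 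Drop them and keep only the marginal (or, better, the functional) argument above.
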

\begin{proof}
Assume that $0$ is irregular for $(-\infty,0)$. Then with arbitrarily high probability $X_t\geq 0$ for all $t\in[0,h]$, where $h>0$ is small enough, but then $X^\epp_t\geq 0$ for all $t\in[0,h/\ep]$. Using Skorokhod's representation theorem we conclude that $\X$ must be non-negative. This completes the proof of the first statement and the second one follows by considering~$-X$. 
\end{proof}
Importantly, the case when 0 is regular for both $(-\infty,0)$ and $(0,\infty)$ does not in general imply that $\X$ is non-monotone, see \S\ref{sec:ex_stable} for an example.

\section{Domains of attraction when zooming in on a \levy process}\label{SEC:DOMAINS}
In this section for every self-similar \levy processes $\X$, see \S\ref{sec:self_similar}, we provide necessary and sufficient conditions on the characteristics of $X$ so that the limit in~\eqref{eq:psi_convergence} holds true, and also supply the associated scaling function~$a_\ep$. Recall from~\eqref{eq:types} that for any process $X$ the limit $\X$ and the scaling function $a_\ep$ are (asymptotically) unique up to a deterministic factor.
As before, the \levy triplet of $X$ is denoted by $(\gamma,\sigma,\Pi)$, see \S\ref{sec:levy}. Moreover, for a b.v.\ process we use the linear drift~$\gamma'$. The quantities corresponding to~$\X$ are denoted by $\widehat \gamma,\widehat \sigma,\widehat c_\pm$ and so on.

The following zooming-in theory is rather similar to the classical zooming-out theory and the characterization of the strict domains of attraction for sums of i.i.d.\ random variables, see~\cite[Thm.\ 7.35.2]{gnedenko_kolmogorov} or~\cite[Thm.\ 8.3.1]{bingham_regular}, as well as~\citep{shimura}. Instead of conditions on the tails of the distribution of a random variable, in zooming-in context one needs to consider the small-time behaviour of~$X$. 
Characterization of the domains of attraction to a Brownian motion and a linear drift process are due to~\cite{doney_maller_stability}, but see the comments following Theorem~\ref{thm:domains}. Conditions for attraction to strictly stable \levy processes are not readily available in the literature, even though non-strict domains have been characterized by~\cite{maller_mason}. Somewhat related scaling limits of normalized small jump processes are studied by~\cite{asmussen_rosinski} and~\cite{covo}. Additionally, it is noted that the literature on various aspects of small-time behaviour of \levy processes is extensive, see the works of~\cite{doney_fluctuations,bertoin_doney_maller,savov,maller_trimmed} and references therein.

The following result presents some simple observations and, in particular, it states that the \levy measure of $X$ can be modified arbitrarily away from~0 without  affecting the limit under zooming in. 
\begin{lemma}\label{lem:easy_domains}
If $\sigma>0$ then~\eqref{eq:psi_convergence} holds with $\widehat \psi(\theta)=\widehat\sigma^2\theta^2/2$ and $a_\ep\sim\sqrt{\ep}\sigma/\widehat\sigma$ for any $\widehat \sigma>0$.

If $X$ is b.v.\ with $\gamma'\neq 0$ then~\eqref{eq:psi_convergence} holds with $\widehat \psi(\theta)=\widehat \gamma\theta$ and $a_\ep\sim \ep \gamma'/\widehat \gamma$ for any $\widehat \gamma\neq 0$ of the same sign as $\gamma'$.

If~\eqref{eq:psi_convergence} holds for~$X$ then it also holds for the independent sum of $X$ and a compound Poisson process, and vice versa.
\end{lemma}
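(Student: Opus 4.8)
The plan is to verify the analytic criterion~\eqref{eq:psi_convergence} directly from the L\'evy--Khintchine formula in each of the three cases. In every case the candidate limit is the exponent of a Brownian motion, of a linear drift, or (in the third part) the unchanged $\widehat\psi$, and the only substantive point is that the contribution of the L\'evy measure to $\ep\psi(\theta/a_\ep)$ is asymptotically negligible. The workhorse will be the elementary inequalities $|\ee^{iy}-1|\le 2\wedge|y|$ and $|\ee^{iy}-1-iy|\le y^2/2$, applied with $y=\theta x/a_\ep$ for purely imaginary $\theta$, together with $\int(x^2\wedge1)\,\Pi(\D x)<\infty$ (and $\int(|x|\wedge1)\,\Pi(\D x)<\infty$ in the b.v.\ case). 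In each case I would fix a small $\d>0$, split $\int_{\R}=\int_{|x|<\d}+\int_{\d\le|x|<1}+\int_{|x|\ge1}$, send $\ep\downarrow0$ with $\d$ held fixed, and only then send $\d\downarrow0$.

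For the first statement take $a_\ep=\sqrt\ep\,\sigma/\widehat\sigma$, so that $\ep/a_\ep^2=\widehat\sigma^2/\sigma^2$ is constant and $\ep/a_\ep\to0$. Then $\ep\cdot\tfrac12\sigma^2(\theta/a_\ep)^2=\tfrac12\widehat\sigma^2\theta^2$ identically and $\ep\gamma\theta/a_\ep\to0$, while in $\ep\int_{\R}(\ee^{\theta x/a_\ep}-1-(\theta x/a_\ep)\ind{|x|<1})\,\Pi(\D x)$ the part over $\{|x|<\d\}$ is at most $\tfrac{\widehat\sigma^2}{2\sigma^2}|\theta|^2\int_{|x|<\d}x^2\,\Pi(\D x)$, small for small $\d$ uniformly in $\ep$; the part over $\{\d\le|x|<1\}$ is at most $\ep(2+|\theta|/a_\ep)\,\Pi(\d\le|x|<1)\to0$; and the part over $\{|x|\ge1\}$ is at most $2\ep\,\Pi(|x|\ge1)\to0$. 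Hence $\ep\psi(\theta/a_\ep)\to\tfrac12\widehat\sigma^2\theta^2$ for every $\widehat\sigma>0$; one could equally argue by decomposing $X$ into an independent sum of $\sigma$ times a Brownian motion, whose rescaling is exactly $\widehat\sigma$ times a Brownian motion, and a L\'evy process with no Gaussian component, whose rescaling vanishes by the same estimate. The second statement goes the same way from~\eqref{eq:psi_1}, using that a b.v.\ process has $\sigma=0$ and $\int_{|x|<1}|x|\,\Pi(\D x)<\infty$: with $a_\ep=\ep\gamma'/\widehat\gamma$ (legitimate exactly because $\gamma'/\widehat\gamma>0$) one has $\ep\gamma'\theta/a_\ep=\widehat\gamma\theta$ and $\ep\int_{\R}(\ee^{\theta x/a_\ep}-1)\,\Pi(\D x)\to0$, the near-$0$ part being bounded by $(\widehat\gamma/\gamma')|\theta|\int_{|x|<\d}|x|\,\Pi(\D x)$ and the remainder by $2\ep\,\Pi(|x|\ge\d)$.

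The third statement is essentially free: a compound Poisson process has bounded L\'evy exponent $\psi^N(\theta)=\lambda(\widehat F(\theta)-1)$ with $|\psi^N(\theta)|\le2\lambda$, so $\ep\psi^N(\theta/a_\ep)\to0$ for \emph{any} scaling $a_\ep$; consequently $\ep(\psi+\psi^N)(\theta/a_\ep)$ and $\ep\psi(\theta/a_\ep)$ have the same limit, and the converse follows by adding $-N$, which is again compound Poisson. I do not anticipate a real obstacle here; the only thing needing care is the order of limits in the truncation argument and the observation that the chosen normalizations keep $\ep/a_\ep^2$ (first statement), respectively $\ep/a_\ep$ (second statement), bounded away from $0$ and $\infty$, which is precisely what prevents the accumulation of small jumps from overwhelming the leading Gaussian, respectively drift, term.
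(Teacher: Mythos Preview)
Your proof is correct. For the third statement it is identical to the paper's; for the first two the paper takes a slightly more compact route, citing the well-known asymptotics $\psi(\theta)/\theta^2\to\sigma^2/2$ as $|\theta|\to\infty$ (and, analogously, $\psi(\theta)/\theta\to\gamma'$ in the b.v.\ case) and then writing
\[
\ep\psi(\theta/a_\ep)=\frac{\psi(\theta/a_\ep)}{(\theta/a_\ep)^2}\cdot\frac{\theta^2\ep}{a_\ep^2}\to\tfrac12\widehat\sigma^2\theta^2.
\]
Your splitting $\int_{\R}=\int_{|x|<\d}+\int_{\d\le|x|<1}+\int_{|x|\ge1}$ together with the inequalities $|\ee^{iy}-1-iy|\le y^2/2$ and $|\ee^{iy}-1|\le 2\wedge|y|$ is exactly how one proves those asymptotics in the first place, so you have effectively reproved the cited lemma inline. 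The trade-off is self-containment versus brevity: the paper's version is a two-line computation relying on \cite[Prop.\ I.2]{bertoin}, whereas yours needs no external input. One small remark: you fix $a_\ep=\sqrt\ep\,\sigma/\widehat\sigma$ (respectively $a_\ep=\ep\gamma'/\widehat\gamma$) exactly, while the lemma allows any $a_\ep$ asymptotically equivalent to this; your bounds depend only on $\ep/a_\ep^2$ and $\ep/a_\ep$, so the same argument covers the general case without change.
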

\begin{proof}
It is well known~\citep[Prop.\ I.2]{bertoin} that $\psi(\theta)/\theta^2\rightarrow \sigma^2/2$ as $|\theta|\rightarrow\infty$. Hence for $a_\ep\sim \sqrt \ep\sigma/\widehat \sigma$ we have
\[\psi^\epp(\theta)=\ep\psi(\theta/a_\ep)=\frac{\psi(\theta/a_\ep)}{\theta^2/a_\ep^2}\theta^2\ep/a^2_\ep\rightarrow \widehat\sigma^2\theta^2/2\] as $\ep\downarrow 0$, and the second claim follows similarly.

Concerning the last statement, it is sufficient to show that $\ep\widetilde\psi(\theta/a_\ep)\to 0$ with $\widetilde\psi(\theta)$ corresponding to any compound Poisson process. This is immediate, because such $|\widetilde\psi(\theta)|$ is bounded.
\end{proof}

For a complete characterization of the domains of attraction we define as in~\citep{maller_mason} the truncated mean and truncated variance functions for $x\in(0,1)$:
\begin{align*}
m(x)=\gamma-\int_{x\leq |y|<1}y\Pi(\D y),\qquad v(x)=\sigma^2+\int_{|y|<x} y^2\Pi(\D y),
\end{align*}
as well as the tails of $\Pi$:
\[\oPi_+(x)=\Pi(x,\infty),\qquad \oPi_-(x)=\Pi(-\infty,-x),\qquad \oPi(x)=\oPi_+(x)+\oPi_-(x).\] Note that when $\int_{-1}^1|x|\Pi(\D x)<\infty$ we have an alternative expression for the truncated mean:
\begin{equation}\label{eq:mean2}
m(x)=\gamma'+\int_{|y|< x}y\Pi(\D y).
\end{equation}

\begin{theorem}[Domains of attraction under zooming in]\label{thm:domains}
The following cases hold true with respect to~\eqref{eq:psi_convergence}:
\begin{itemize}
\item[(i)] $X$ is attracted to the Brownian motion with variance $\widehat \sigma$ if and only if 
\[v\in \RV_0\qquad\text{ or equivalently }\qquad x^2\oPi(x)/v(x)\to 0\]
as $x\downarrow 0$, and $a_\ep$ is chosen to satisfy~$a_\ep^2/v(a_\ep)\sim \ep/\widehat\sigma^2$. 
\item[(ii)] $X$ is attracted to the non-zero linear drift $(\widehat \gamma t)_{t\geq 0}$ if and only if 
\[\sigma=0, \quad m(x)/\widehat \gamma\text{ is eventually positive}, \quad x\oPi(x)/m(x)\to 0\]
as $x\downarrow 0$, and $a_\ep$ is chosen to satisfy $a_\ep/m(a_\ep)\sim\ep/\widehat \gamma$. 
\item[(iii)] $X$ is attracted to the strictly $\a$-stable \levy process with parameters $\widehat c_+,\widehat c_-,\widehat \gamma$, see~\eqref{eq:psi_stable}, if and only if 
\begin{itemize}
\item[(a)] $\sigma=0$, and $\gamma'=0$ when $X$ is b.v.,
\item[(b)] $\oPi_\pm\in\RV_{-\a}$ if $\widehat c_\pm>0$, and $\oPi_+(x)/\oPi_-(x)\to \widehat c_+/\widehat c_-$ as $x\downarrow 0$,
\item[(c)] for $\a=1$ it is additionally required that
\begin{equation}\label{eq:a_is_1}
\frac{m(x)}{x\oPi_+(x)}\to \widehat \gamma/\widehat c_+\qquad\text{as }x\downarrow 0,
\end{equation}
\end{itemize}
and $a_\ep$ is chosen to satisfy $\oPi_\pm(a_\ep)\sim \ep^{-1} \widehat c_\pm/\a$ if $\widehat c_\pm>0$.
\end{itemize}

\end{theorem}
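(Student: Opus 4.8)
The plan is to establish the Lévy-exponent convergence~\eqref{eq:psi_convergence} in each of the three cases by reducing everything to a single computation: writing $\psi^\epp(\theta)=\ep\psi(\theta/a_\ep)$ and splitting the Lévy integral at the truncation level, so that the small-jump part contributes the diffusive/drift term (depending on the case) and the large-jump part contributes the stable jump term. Throughout I would use the scaling relations defining $a_\ep$ to rewrite $\ep$ in terms of $v(a_\ep)$, $m(a_\ep)$ or $\oPi_\pm(a_\ep)$, so that the only genuine analytic inputs are (a) the regular-variation hypotheses on $v$, $m$, $\oPi_\pm$, and (b) standard results relating $x^2\oPi(x)$, $x\,m(x)$ and $v(x)$ (for instance, $v\in\RV_0$ forces $x^2\oPi(x)/v(x)\to 0$ via a Karamata-type argument, and similarly $x\,|m(x)|$ is negligible against $v(x)$ in case~(i)). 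For case~(ii) I would first note that b.v.\ with the stated conditions lets me use~\eqref{eq:psi_1} and~\eqref{eq:mean2}, so that $\psi(\theta)=\gamma'\theta+\int(\ee^{\theta x}-1)\Pi(\D x)$ and the analysis is parallel but one order coarser.

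For the \textbf{sufficiency} direction in case~(iii), the core computation is the classical one: after the change of variables $y\mapsto y/a_\ep$ in the Lévy integral, $\ep\int(\ee^{\theta y/a_\ep}-1-\ldots)\Pi(\D y)$ becomes $\int(\ee^{\theta z}-1-\ldots)\,\ep\,\Pi(a_\ep\,\D z)$, and the measure $\ep\,\Pi(a_\ep\,\D z)$ converges vaguely on $\R\setminus\{0\}$ to the stable Lévy measure~\eqref{eq:psi_stable} precisely because $\oPi_\pm\in\RV_{-\a}$ with the prescribed normalization $\oPi_\pm(a_\ep)\sim \ep^{-1}\widehat c_\pm/\a$; uniform integrability of the compensated integrand near $0$ and at $\infty$ (using $\int(z^2\wedge 1)$-type bounds that follow from regular variation with index $\a\in(0,2)$) upgrades vague convergence to convergence of the integral. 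The centering term requires care only when $\a=1$, where the truncation-dependent compensator does not cancel and one must invoke~\eqref{eq:a_is_1} to pin down the limiting drift $\widehat\gamma$; when $\a\neq 1$ the drift is forced by strict stability and~\eqref{eq:psi_stable}, so condition~(a) (namely $\sigma=0$ and, in the b.v.\ case, $\gamma'=0$) is exactly what removes the spurious $\theta^2$ and linear terms.

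For \textbf{necessity} I would run the argument in reverse, leaning on the Convergence to Types Lemma~\ref{lem:types} and uniqueness~\eqref{eq:types}: if $\psi^\epp(\theta)\to\widehat\psi(\theta)$ for a self-similar limit, then Theorem~\ref{thm:lamperti} already gives $a_\ep\in\RV_{1/\a}$, and the known correspondence between convergence of Lévy exponents and vague convergence of the triplets (the Lévy continuity theorem for triplets, \cite[Cor.\ VII.3.6]{jacod_shiryaev}) forces $\ep\,\Pi(a_\ep\,\D z)\vaguely\widehat\Pi$, $\ep\,a_\ep^{-2}v(a_\ep)\to\widehat\sigma^2$, and the analogous statement for the truncated means; reading these back through the definitions of $v,m,\oPi_\pm$ and $a_\ep\in\RV_{1/\a}$ yields the regular-variation conditions and the normalization of $a_\ep$. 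Lemma~\ref{lem:regularity} and the structural facts in \S\ref{sec:self_similar} dispose of the degenerate possibilities (e.g.\ a one-sided jump limit).

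The \textbf{main obstacle} I anticipate is the borderline case $\a=1$, and more generally the bookkeeping of the truncation-level compensator: one must show that the ``naive'' centering $\gamma\theta$ in~\eqref{eq:psi}, after rescaling, combines with the compensating term $-\theta y\ind{|y|<1}$ in just the right way to produce a finite limit, and that the discrepancy between truncating at level $1$ and at level $a_\ep$ (a term of order $\int_{a_\ep\le|y|<1}y\,\Pi(\D y)$) is controlled by $m(a_\ep)$ and hence by~\eqref{eq:a_is_1}. A secondary technical point is verifying the uniform-integrability/domination needed to pass vague convergence of the rescaled Lévy measures through the integral against $\ee^{\theta z}-1-\theta z\ind{|z|<1}$; this is routine for $\a\in(0,2)$ but needs the regular-variation estimates $z^2\oPi(z)=\Oh(v(z))$-type bounds near $0$, which I would isolate as a preliminary lemma (or cite from \cite{maller_mason}) rather than reprove. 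I would also separate out, as easy reductions handled by Lemma~\ref{lem:easy_domains}, the presence of a Gaussian part or a nonzero b.v.\ drift, so that in case~(iii) one may assume from the outset $\sigma=0$ and (in the b.v.\ case) $\gamma'=0$.
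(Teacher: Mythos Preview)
Your overall strategy --- reduce $\psi^\epp\to\widehat\psi$ to convergence of the L\'evy triplet (vague convergence of the rescaled L\'evy measure together with the truncated mean and variance) and then verify or extract the regular-variation conditions --- is exactly the one the paper uses, with Karamata's theorem doing the heavy lifting (the paper cites~\cite[Thm.\ 15.14]{kallenberg} rather than~\cite{jacod_shiryaev} for the triplet criterion, but the content is the same). Two points, however, need correction.

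\textbf{Case (ii) is not a b.v.\ situation.} You write that ``b.v.\ with the stated conditions lets me use~\eqref{eq:psi_1} and~\eqref{eq:mean2}''. But the hypotheses of case~(ii) do \emph{not} force $X$ to be of bounded variation: as the example in~\S\ref{sec:ex_stable} shows, a non-strictly $1$-stable process is ub.v., has $\sigma=0$, satisfies $x\oPi(x)/m(x)\to 0$, and is attracted to a linear drift. So the sufficiency argument for case~(ii) must be run with the general representation~\eqref{eq:psi}, and you cannot peel off a $\gamma'\theta$ term. The paper instead works with $m(x)$ directly and verifies $\ep m(u a_\ep)/a_\ep\to\widehat\gamma$, $\ep v(a_\ep)/a_\ep^2\to 0$ and $\ep\oPi(x a_\ep)\to 0$ separately; the middle one in particular requires a short Karamata computation and is not automatic.

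\textbf{Necessity: from sequence convergence to regular variation.} In your necessity sketch you say triplet convergence ``forces $\ep\,a_\ep^{-2}v(a_\ep)\to\widehat\sigma^2$'' and the analogous limits, and that reading these back yields the regular-variation conditions. The gap is that these are limits along the particular family $a_\ep$ (with $a_\ep\in\RV_{1/\a}$), whereas the theorem claims regular variation for \emph{all} $x\downarrow 0$. For monotone functions like $v$ and $\oPi_\pm$ this passage is supplied by~\cite[Thm.\ 1.10.3]{bingham_regular} together with $a_{1/n}\sim a_{1/(n+1)}$, and the paper invokes exactly that. But $m$ is not monotone, so in case~(ii) the paper first shows $\ep m(u a_\ep)/a_\ep\to\widehat\gamma$ \emph{uniformly} in $u$ on compacts (by bounding $|m(ua_\ep)-m(a_\ep)|$ through the tail $\Pi^\epp$) and then appeals to~\cite[1.9.3]{bingham_regular}. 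Likewise, for $\a=1$ in case~(iii), upgrading~\eqref{eq:a_is_1} from the sequence $a_\ep$ to arbitrary $x\downarrow 0$ requires an explicit sandwich between $a_{1/n}$ and $a_{1/(n+1)}$. You flag $\a=1$ as the main obstacle, which is right, but the specific difficulty is this sequence-to-continuum step rather than the compensator bookkeeping.
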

\begin{proof}
For completeness we provide proofs of all three cases in Appendix~\ref{app:proofs} using the same machinery, see also the following comments.
\end{proof}

The cases (i) and (ii) are given by~\cite[Thm.\ 2.5 and Thm.\ 2.2]{doney_maller_stability}. In the former result the convergence statements (2.13) and (2.15) are, in fact, equivalent, meaning that seemingly stronger condition (2.16) can be replaced by (2.14). With respect to (iii) it is noted that~\cite[Thm.\ 2.3]{maller_mason} considered $(X_{\ep t}-b_\ep t)/a_\ep\Rightarrow \X_t$ and characterized the respective non-strict domains. Similarly to the classical case, but in the opposite way, no centering is needed for $\a>1$  and in particular for $\a=2$, and  for $\a<1$ we may choose $b_\ep=\gamma'\ep$, whereas the case $\a=1$ is tricky.

To a \levy measure $\Pi$ it is common to associate the index~\citep{blumenthal_getoor} defined by
\[\beta_{BG}:=\inf\{\beta>0:\int_{|x|<1}|x|^\beta\Pi(\D x)<\infty\},\] where necessarily $\beta_{BG}\in [0,2]$.
\begin{corollary}\label{cor:BG}
If $X$ is attracted to $1/\a$-self-similar \levy process in the sense of~\eqref{eq:psi_convergence} then $\beta_{BG}=\a$, unless $\sigma>0$ or $X$ is b.v.\ with $\gamma'\neq 0$. 
\end{corollary}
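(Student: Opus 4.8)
The plan is to work throughout with the tail $\oPi$ near the origin. First I would record the elementary identity, valid for every $\b>0$,
\[\int_{|x|<1}|x|^\b\,\Pi(\D x)<\infty\quad\Longleftrightarrow\quad \int_0^1 t^{\b-1}\oPi(t)\,\D t<\infty,\]
obtained by writing $|x|^\b=\b\int_0^{|x|}t^{\b-1}\D t$ and using Fubini; thus $\b_{BG}=\inf\{\b>0:\int_0^1 t^{\b-1}\oPi(t)\D t<\infty\}$. Since $\sigma>0$ and ``$X$ b.v.\ with $\g'\neq0$'' are excluded, Theorem~\ref{thm:domains} places us in one of three situations: (i) with $\sigma=0$ and $\a=2$; (ii) with $\a=1$ and $X$ either ub.v.\ or b.v.\ with $\g'=0$; (iii) with $\a\in(0,2)$. (The two excluded situations are precisely those in which $\Pi$ is irrelevant to the limit, so there $\b_{BG}$ is genuinely unconstrained.)

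Case (iii) is immediate: the hypotheses force $\oPi\in\RV_{-\a}$ (both tails are $\RV_{-\a}$ when $\widehat c_\pm>0$, and when one of them vanishes the corresponding tail is $\oh$ of the other), hence $t^{\b-1}\oPi(t)\in\RV_{\b-1-\a}$ and Karamata's integrability criterion gives $\int_0^1 t^{\b-1}\oPi(t)\D t<\infty$ for $\b>\a$ and $=\infty$ for $\b<\a$, so $\b_{BG}=\a$. For the inequality $\b_{BG}\le\a$ in the other cases, observe that $\b_{BG}\le2=\a$ in case (i), and $\b_{BG}\le1=\a$ whenever $X$ is b.v.; the one new point is case (ii) with $X$ ub.v. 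There I would use that $\psi^\epp\to\widehat\psi$ with $\widehat\psi(\theta)=\widehat\g\theta$ gives, at $\theta=i$ and taking real parts, $\ep\,h(1/a_\ep)\to 0$, where $h(\lambda):=\mathrm{Re}(-\psi(i\lambda))=\int(1-\cos\lambda y)\Pi(\D y)\ge0$ (here $\sigma=0$). Since $a_\ep\in\RV_1$, reparametrising by $\mu=1/a_\ep$ and invoking the asymptotic inverse of $a$ turns this into $h(\mu)=\oh(\mu^{1+\d})$ for every $\d>0$; combined with the elementary bound $h(\lambda)\ge c\lambda^2\int_{|y|<1/\lambda}y^2\Pi(\D y)$ this yields $\int_{|y|<x}y^2\Pi(\D y)=\oh(x^{1-\d})$, and telescoping $\oPi(x)-\oPi(2x)\le x^{-2}\int_{x\le|y|<2x}y^2\Pi(\D y)$ along a dyadic scale gives $\oPi(x)=\oh(x^{-1-\d})$ for every $\d>0$, which makes $t^{\b-1}\oPi(t)$ integrable at $0$ for each $\b>1$, so $\b_{BG}\le1$.

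For the reverse inequality $\b_{BG}\ge\a$ I would argue by a dyadic device in cases (i) and (ii). In case (ii) with $X$ ub.v.\ nothing is needed, since $\int_{|y|<1}|y|\Pi(\D y)=\infty$ already forces $\b_{BG}\ge1$. In case (i) (so $\sigma=0$, $\a=2$), put $W(x)=\int_0^x s\oPi(s)\D s$; the identity $2W(x)=v(x)+x^2\oPi(x)$ together with $x^2\oPi(x)=\oh(v(x))$ gives $W\sim v/2$, hence $W\in\RV_0$ and $W(0+)=0$. With $a_k:=2^{-2k}\oPi(2^{-k})$ one checks $\sum_{j\ge k}a_j\asymp W(2^{-k})\asymp v(2^{-k})$ and $\int_0^{1/2}t^{\b-1}\oPi(t)\D t\ge c\sum_k a_k2^{k(2-\b)}$; if the latter sum were finite for some $\b<2$ we would get $v(2^{-k})=\oh(2^{-k(2-\b)})$, contradicting $v\in\RV_0$. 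Hence $\b_{BG}\ge2$. Case (ii) with $X$ b.v.\ and $\g'=0$ runs identically with $R(x):=\int_0^x\oPi(s)\D s$ in place of $W$: from $m(x)=\int_{|y|<x}y\Pi(\D y)\le R(x)$ (using b.v.\ and~\eqref{eq:mean2}) and $x\oPi(x)=\oh(m(x))$ one gets $xR'(x)/R(x)\to0$, so $R\in\RV_0$ with $R(0+)=0$, and the same dyadic estimate forces $\b_{BG}\ge1$.

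The genuinely delicate part is the lower bound $\b_{BG}\ge\a$ in the ``degenerate'' limits, (i) with $\sigma=0$ and (ii) with $X$ b.v.\ and $\g'=0$: there the limit process is as regular as possible, so no jump activity of $X$ can be read off $\widehat\psi$, and one must exploit the regular-variation hypotheses ($v\in\RV_0$, resp.\ $R\in\RV_0$) quantitatively; the dyadic comparison of a slowly varying integral functional with the tail sums $\sum_{j\ge k}a_j$ (resp.\ $\sum_{j\ge k}2^{-k}\oPi(2^{-k})$) is what makes this work. A secondary, purely technical point is case (ii) with $X$ ub.v., where one must pass from control of $\psi$ along the sequence $1/a_\ep$ to control of $\psi$ at an arbitrary large argument, which is where regular variation of $a_\ep$ and of its inverse enters.
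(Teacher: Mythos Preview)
Your argument is correct, and case~(iii) matches the paper. In cases~(i) and~(ii) you take a genuinely different route. For the lower bounds $\b_{BG}\ge\a$ in case~(i) and case~(ii) b.v., you build the auxiliary integrals $W(x)=\int_0^x s\oPi(s)\D s$ and $R(x)=\int_0^x\oPi(s)\D s$, show they are slowly varying, and derive a contradiction via a dyadic decomposition and Potter's lower bound $W(x)\ge c x^\epsilon$. The paper instead works with the Stieltjes form of Karamata's theorem: assuming $\int_0^1 x^{-\d}\D v(x)<\infty$ it shows that $V(y)=\int_y^1 x^{-\d}\D v(x)$ would be slowly varying with a positive limit, and then~\eqref{eq:k2} applied to $v(x)=-\int_0^x y^\d\D V(y)$ forces $v(x)/(x^\d V(x))\to 0$, contradicting $v/V\in\RV_0$; the same device is run with $M(x)=\int_{|y|<x}|y|\Pi(\D y)$ in case~(ii). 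Your dyadic argument is more elementary but longer; the paper's is a one-shot Karamata contradiction.

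The largest divergence is case~(ii) ub.v., where you pass through the real part $h(\lambda)=\int(1-\cos\lambda y)\Pi(\D y)$ of $-\psi(i\lambda)$, bound $v$ and then $\oPi$ by telescoping. This works, but the step from $\ep h(1/a_\ep)\to 0$ to $h(\mu)=\oh(\mu^{1+\d})$ for \emph{all} large~$\mu$ needs either uniform convergence of $\psi^\epp$ on compacts or a doubling bound such as $h(2\lambda)\le 4h(\lambda)$; you flag this but do not quite carry it out. The paper avoids the detour entirely: from $x\oPi(x)=\oh(|m(x)|)$ and $|m(x)|\le |\g|+M(x)$ with $M(x)=\int_x^1|y|\Pi(\D y)\to\infty$, one gets $x\oPi(x)=\oh(M(x))$, hence $M\in\RV_0$ by~\eqref{eq:k1}, and then $\int_{|x|<1}|x|^{1+\d}\Pi(\D x)=-\int_0^1 x^\d\D M(x)<\infty$ by Potter bounds, giving $\b_{BG}\le 1+\d$ in two lines. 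Your approach buys independence from the Stieltjes Karamata machinery at the cost of a Fourier-analytic argument; the paper's buys brevity and uniformity across all subcases.
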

\begin{proof}
The proof is given in Appendix~\ref{app:proofs}.
\end{proof}
In particular, Corollary~\ref{cor:BG} shows that for $\a>1$ both $X$ and the limit are ub.v.\ processes, and for $\a<1$ both are b.v.\ processes. In the case of $\a=1$ the limit process may be of different type than~$X$, see~\S\ref{sec:ex_stable}.
In the rest of this section we assume that $\sigma=0$ and $\gamma'=0$ if $X$ is b.v.\ process, since otherwise the limit always exists and it is given by the Brownian motion or the linear drift process, see Lemma~\ref{lem:easy_domains}. It is not hard to verify that these two cases are included in (i) and (ii) of Theorem~\ref{thm:domains}, respectively.

\subsection{Comments}
Note that there are two essentially different limit processes corresponding to $\a=1$: linear drift process in~(ii), and 1-stable \levy process in~(iii). In the latter case $m(x)/(x\oPi(x))$ must have a finite limit~\eqref{eq:a_is_1}, whereas in the former case it must go to~$+\infty$ or $-\infty$.

Consider for a moment condition (b) in Theorem~\ref{thm:domains}~(iii). In the case of $\widehat c_\pm>0$ this condition  is equivalent to multivariate regular variation on the cone consisting of two rays, $\mathbb R_+$ and~$\mathbb R_-$, of the function evaluating to $\oPi_+(x)$ and~$\oPi_-(|x|)$, respectively. It is noted that multivariate regular variation is a common property used in characterizing various domains of attraction~\citep{resnick_heavy}. Let us also point out that for any $\a>0$ it is possible to construct an example of positive decreasing $\oPi_\pm\in\RV_{-\a}$ such that also $\oPi\in\RV_{-\a}$ but the balance condition is not satisfied, i.e., $\oPi_+/\oPi_-$ does not have a limit in $[0,\infty]$.

For $X$ attracted to strictly $\a$-stable process it must be that $\oPi\in \RV_{-\a}$. Regular variation of $\oPi$ is not required, however, when $X$ is attracted to (i) Brownian motion or (ii) linear drift process.
Nevertheless, if we assume that $\oPi\in RV_{-\a}$ then necessarily $\a=2$ in~(i) and $\a=1$ in~(ii), see Corollary~\ref{cor:BG} and its proof; it is assumed here that $\sigma=0$ and $\gamma'=0$ for a b.v.\ process.

Finally, let us provide some examples of \levy processes without a non-trivial limiting process under zooming-in. 
Firstly, any b.v.\ process with $\gamma'=0$ and $\oPi\in\RV_0$, including the compound Poisson process, is such.
Secondly, for any $\a\in(0,1)\cup(1,2)$ we may choose a process with $\oPi\in\RV_{-\a}$ which satisfies (a) of Theorem~\ref{thm:domains}~(iii) but does not satisfy the balance condition in~(b). Thirdly, Corollary~\ref{cor:BG} can be employed to provide further examples with a non regularly varying~$\oPi$.

\subsection{Noteworthy examples}
In the boundary cases, when $\a=2$ and especially so when $\a=1$, somewhat surprising examples can be constructed. 
\subsubsection{Process with $\sigma=0$ attracted to Brownian motion}\label{sec:ex_BM}
Take $\Pi(\D x)=x^{-3}\log^{-2}x\D x$ for small $x>0$ and let $\Pi(-\infty,0)=0$ so that
\[v(x)=\int_0^x y^{-1}\log^{-2}y\D y=-1/\log x\in\RV_0.\]
According to Theorem~\ref{thm:domains}~(i) this process is attracted by the Brownian motion.
The scaling function must satisfy $-a_\ep^2\log a_\ep\sim\ep/\widehat \sigma^2$ and in particular $a_\ep/\sqrt \ep\to 0$. 
\subsubsection{Non-strictly 1-stable process is attracted to linear drift}\label{sec:ex_stable}
Let $X$ be a 1-stable process~\eqref{eq:psi_stable} which is not strictly stable, i.e., $c_+\neq c_-$. A simple computation reveals that $\oPi(x)=(c_++c_-)/x$ and $m(x)=\gamma+(c_+-c_-)\log x$. Hence we see that the conditions of Theorem~\ref{thm:domains}~(ii) are satisfied for any $\widehat \gamma$ having the same sign as $(c_--c_+)$. Therefore, a non-strictly 1-stable process is attracted to a linear drift process. The scaling function must satisfy \[-a_\ep/\log a_\ep\sim\ep(c_--c_+)/\widehat \gamma,\] and so $a_\ep/\ep\to \infty$. In this case one may also verify~\eqref{eq:psi_convergence} directly using the above function $a_\ep$ and the analytic representation of $\psi(\theta)$, see~\cite[(14.20) and (14.25)]{sato}.
 This example shows in particular that ub.v.\ process may have a b.v.\ limit, which at first sight may look counter-intuitive: the process $X$ is such that $0$ is regular for both half lines, whereas $0$ is irregular for one half line for the limit process.
Note also that when $c_+=0$ the limit is a positive drift, which intuitively means that under zooming-in we see the drift compensating negative jumps. 
 
\subsubsection{B.v.\ process attracted to strictly 1-stable process}
Let $X$ be b.v.\ process with $\gamma'=0$ and $\oPi_+=\oPi_-\in\RV_{-1}$. A concrete example is obtained by taking $\oPi_+(x)=x^{-1}\log^{-2} x$  for small $x>0$. Now $m(x)=\int_{|y|<x} y\Pi(\D y)=0$ and so~\eqref{eq:a_is_1} holds with $\widehat\gamma=0$. The appropriate scaling function satisfies 
\[a_\ep\log^2 a_\ep\sim \ep/\widehat c_+,\]
and so $a_\ep/\ep\to 0$.
\subsubsection{On the necessity of~\eqref{eq:a_is_1}}\label{example:last}
Let $\sigma=0,\gamma=0$ and $\oPi_\pm(x)=-x^{-1}/\log x$ for small $x>0$ so that $X$ is ub.v.\ process. As in the above example the limit is strictly 1-stable process with $\widehat\gamma=0$. Next, keeping everything else the same let $\oPi_+(x)=-x^{-1}/\log x+\ind{x\leq 1/2}$, which yields $m(x)=-1/2$ and thus $m(x)/\{x\oPi_+(x)\}\to-\infty$. 
In particular, we see that~\eqref{eq:a_is_1} does not hold even though the assumptions (a) and (b) of Theorem~\ref{thm:domains}~(iii) are satisfied. In fact, the limit process must be a negative linear drift, see Theorem~\ref{thm:domains}~(ii).
This may seem to contradict the last item of Lemma~\ref{lem:easy_domains}. Observe, however, that addition of an independent compound Poisson process with \levy measure $\delta_{1/2}$ leads also to modification of $\gamma$ so that $\gamma=1/2$, and in that case the limit is preserved.



\section{Invariance principle for \levy processes conditioned to stay negative}\label{sec:invariance}
Invariance principles for processes derived from random walks is a classical theme in probability~\citep{skorokhod}.
Concerning the case of a random walk conditioned to stay negative the reader is referred to the works of~\cite{caravenna_chaumont}, \cite{invariance_chaumont_doney} and references therein. By the standard approximation argument one may also derive an invariance principle for \levy processes conditioned to stay negative, which is stated below.

Recall from \S\ref{sec:canonical} that we work with two-sided paths taking values in~$\R$ compactified by addition of the absorbing state $\dagger$, and such that $\omega_t=0$ for all $t<0$. This trick allows us to provide a clean formulation of the following functional limit theorem. 
\begin{theorem}\label{thm:convergence}
Let $\Xn$ be a sequence of (possibly killed) \levy processes weakly converging to a \levy process $X$, which is  not a compound Poisson process. Then $\pn_x^\downarrow\Rightarrow\p_x^\downarrow$ for any $x<0$ and  $\pn^\downarrow\Rightarrow\p^\downarrow$, where the latter law may put a positive mass on~$(\dagger)_{t\geq 0}$.
\end{theorem}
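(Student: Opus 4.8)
The plan is to derive this from a corresponding invariance principle for random walks conditioned to stay negative (the results of Caravenna–Chaumont and Chaumont–Doney cited in the text) by a standard approximation argument, in two stages: first pass from $X^{(n)}$ to $X$ when both are already \levy processes but $X^{(n)}$ is a ``coarse'' discretization of $X$, and then use a diagonal argument to handle a general weakly convergent sequence $X^{(n)}\Rightarrow X$. For the discretization step, fix $h>0$ and let $X^{h}$ be the random walk obtained by sampling $X$ along the grid $h\mathbb Z_{\geq 0}$ (linearly interpolated, or viewed in the canonical two-sided path space with the value frozen between grid points); then $X^{h}\Rightarrow X$ in the (extended) Skorokhod topology as $h\downarrow 0$, and because $X$ is not a compound Poisson process it is not a.s.\ constant on any interval, which is exactly the non-degeneracy hypothesis needed to invoke the random-walk invariance principle for the conditioned law. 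This yields $(\p^{h})_x^\downarrow\Rightarrow\p_x^\downarrow$ and $(\p^{h})^\downarrow\Rightarrow\p^\downarrow$ as $h\downarrow 0$.

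Next I would reduce the general statement to the discretization statement. Given $X^{(n)}\Rightarrow X$, choose a sequence $h_n\downarrow 0$ slowly enough that the doubly-indexed family $(X^{(n)})^{h_n}$ still converges to $X$; since $(X^{(n)})^{h_n}$ is a random walk, the random-walk invariance principle applies to it directly and gives the claim, provided one checks that replacing $X^{(n)}$ by its $h_n$-skeleton does not change the limit of the conditioned laws. This last point is a uniform-in-$n$ estimate: the conditioned laws of $X^{(n)}$ and of $(X^{(n)})^{h_n}$ differ by an amount controlled by the modulus of continuity / oscillation of $X^{(n)}$ on scale $h_n$, which is uniformly small by tightness of $(X^{(n)})$. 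One also has to track the killing: a killed \levy process corresponds to a random walk killed at a geometric time, and the random-walk results cited cover (or immediately extend to) this case, as the text already remarks that ``extension to killed \levy processes is straightforward.''

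For the post-supremum statements I would work with the identification, recalled in \S\ref{sec:post_sup}, of $\p^\downarrow$ as the limit of the laws of finite-time post-supremum processes together with the representation of $p^\downarrow_t(x,\D y)$ via Doob's $h$-transform; convergence of the semigroups follows from convergence of the conditioned laws $\p^{(n)}_x{}^\downarrow\Rightarrow\p_x^\downarrow$ and convergence of the relevant harmonic functions (renewal functions of the descending/ascending ladder structure), the latter being available in the quoted random-walk references. The only genuine subtlety is the behaviour at the starting point: when $0$ is regular for $(-\infty,0)$ the limiting post-supremum process leaves $0$ immediately and the convergence is clean, but when $0$ is irregular the process starts at a random negative value and may be $(\dagger)_{t\geq 0}$ with positive probability, so one must argue that this starting distribution — including its atom at $\dagger$, i.e.\ the probability that $X^{(n)}$ never comes back below its supremum over a bounded horizon — also converges; this is where weak convergence $X^{(n)}\Rightarrow X$ must be upgraded to convergence of these boundary functionals, and it is the step I expect to be the main obstacle. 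The remaining verifications (tightness in the extended Skorokhod space of two-sided paths, continuity of the relevant maps away from a null set, measurability of the time-of-supremum functional) are routine and can be dispatched by the standard continuous-mapping / Skorokhod-representation toolkit.
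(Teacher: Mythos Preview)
Your proposal takes a different route from the paper and contains a genuine gap in the diagonal step.

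\textbf{The gap.} You want to replace $X^{(n)}$ by its $h_n$-skeleton $(X^{(n)})^{h_n}$, apply the random-walk invariance principle to the skeletons, and then claim that the conditioned laws of $X^{(n)}$ and of $(X^{(n)})^{h_n}$ differ by an amount controlled by the modulus of continuity of $X^{(n)}$ on scale $h_n$. This last assertion is not justified and is essentially the hard part. Conditioning to stay negative is implemented via Doob's $h$-transform with the renewal function of the ladder height process as harmonic function; closeness of paths in Skorokhod distance does not by itself imply closeness of these harmonic functions or of the conditioned laws. In effect you are assuming, uniformly in $n$, exactly the kind of continuity of $\p\mapsto\p^\downarrow$ that the theorem asserts. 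A diagonal argument of the type you sketch would need, at minimum, a quantitative or uniform-in-$n$ version of the Chaumont--Doney convergence, which is not provided by the cited results. The paper itself notes that the approximation route via Chaumont--Doney works only under extra hypotheses ($0$ regular for both half-lines, no killing, not drifting to $-\infty$); your argument does not address how to remove these.

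\textbf{What the paper does instead.} The paper avoids discretization entirely and works directly at the \levy level in three steps. First, weak convergence $X^{(n)}\Rightarrow X$ and the continuous mapping theorem give convergence of the Wiener--Hopf factors, hence of the bivariate ladder exponents $k^{(n)}(\a,\b)\to k(\a,\b)$. Second, since the Laplace transform of the potential measure $\D m(x)$ of the ladder height is $1/k(0,\b)$, one gets $m^{(n)}\to m$ pointwise (continuity of $m$ uses that $X$ is not compound Poisson); plugging this into the explicit $h$-transform formula
\[
p^\downarrow_t(x,\D y)=\frac{m(-y)}{m(-x)}\,\p_x(X_t\in\D y,\ \overline X_t<0)
\]
yields convergence of the semigroups, and Ethier--Kurtz then gives $\p^{(n)}_x{}^\downarrow\Rightarrow\p_x^\downarrow$. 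Third, for $\p^\downarrow$ the paper uses Chaumont's identification of $\p^\downarrow$ as the law of the post-supremum process under $\p_x^\downarrow$ for any fixed $x<0$; under $\p_x^\downarrow$ the supremum time is a.s.\ finite and unique, so a single application of the continuous mapping theorem (with the two-sided path convention $\omega_t=0$ for $t<0$ handling the irregular case) gives $\p^{(n)}{}^\downarrow\Rightarrow\p^\downarrow$.

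You do gesture toward the $h$-transform and convergence of renewal functions in your third paragraph, but you frame it as a subsidiary check rather than the backbone of the argument. If you promote that idea---prove convergence of $m^{(n)}$ directly from convergence of the ladder exponents and feed it into the $h$-transform formula---you recover the paper's proof and can discard the discretization and diagonal machinery altogether.
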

If the process $X$ has finite supremum then the above statement follows immediately from the continuous mapping theorem and the fact that~$X$ has a unique time of the supremum. The main difficulty lies in the other case, where the law $\p^\downarrow$ is defined as a limit. In fact, Theorem~\ref{thm:convergence} follows by a standard approximation argument from~\cite[Thm.\ 4]{invariance_chaumont_doney}, at least when $X$ is such that $0$ is regular for both half lines $(-\infty,0)$ and $(0,\infty)$, and the processes $X,\Xn$ are non-killed and do not drift to $-\infty$.
An alternative proof of Theorem~\ref{thm:convergence} is given in Appendix~\ref{app:convergence}.

The assumption of two-sided paths with $\omega_t=0$ for all $t<0$ allows us to avoid the following problem. Suppose that $X$ is such that 0 is irregular for $(-\infty,0)$, but $\Xn$ are such that 0 is regular for $(-\infty,0)$; for example, we may add to $X$ a Brownian motion with diminishing variance. 
Then $X$ leads to the post-supremum process starting  at a negative level, whereas for $\Xn$ such process starts at~0 and then quickly jumps to a negative level when $n$ is large. The assumption that these processes are fixed at 0 for negative times ensures the claimed convergence in the Skorokhod topology. A similar problem but with a different solution appears in~\citep[Thm.\ 2]{chaumont}. 
Finally, the assumption of Theorem~\ref{thm:convergence} that $X$ is not a compound Poisson process is essential, and a counter example can be easily provided by considering $X_t-t/n$ so that the limit of ${\pn}^\downarrow$ is the law of $X$ conditioned to stay non-positive rather than negative. 


\section{Zooming in on the supremum}\label{sec:zooming}
Consider a \levy process $X$ satisfying~\eqref{eq:psi_convergence} for some function $a_\ep\downarrow 0$ and a non-trivial \levy process~$\X$, which then must be self-similar. Necessary and sufficient conditions for such convergence are given in \S\ref{SEC:DOMAINS}.
Letting $\widehat \p$ be the law of $\X$, we consider a non-positive process $\xi$ on $\mathbb R$ specified by: 
\begin{equation}\label{eq:xi}(\xi_t)_{t\geq 0}\text{ has the law }\widehat\p^\downarrow,\qquad (-\xi_{(-t)-})_{t\geq 0}\text{ has the law }\widehat\p^\uparrow,\end{equation} where the two parts are independent, see \S\ref{sec:post_sup}. Note that on the right hand side we reverse both time and space. In other words, when looking at $\xi$ from the point $(0,0)$ backwards in time and down in space we see the law~$\widehat\p^\uparrow$.
According to the discussion in~\S\ref{sec:self_similar} and in~\S\ref{sec:post_sup} we have the following cases:
\begin{itemize}
\item[(a)] $\X$ is non-monotone (thus oscillating) then $\xi$ has doubly infinite life time, and it is continuous at~0 with $\xi_0=0$;
\item[(b)] $\X$ is decreasing then $\xi_t=\dagger\ind{t<0}+\X_t\ind{t\geq 0}$;
\item[(c)] $\X$ is increasing then $\xi_t=-\X_{(-t)-}\ind{t<0}+\dagger\ind{t\geq 0}$.
\end{itemize}
Furthermore, the laws $\widehat\p^\downarrow$ and $\widehat\p^\uparrow$ inherit self-similarity from $\widehat\p$, and so they correspond to self-similar Markov processes, where the former is negative and the latter is positive (when started away from~0). Such processes are well-studied and, in particular, they enjoy the Lamperti representation via the associated \levy processes, see~\cite[Cor.\ 2]{conditioned_stable} specifying the latter.
Note from Theorem~\ref{thm:lamperti} that both parts of $\xi$ indeed must be self-similar (when non-trivial) if $\xi$ is to be a limit process.


\begin{theorem}\label{thm:zooming}
Let $X$ be a \levy process satisfying~\eqref{eq:psi_convergence} for some function $a_\ep\downarrow 0$ and a non-trivial \levy process~$\X$.
Consider $X$ on $[0,T)$ for any $T>0$, and let $M$ and $\tau$ be the supremum and its time, respectively. Then 
\begin{equation}\label{eq:converegence_thm}((X_{\tau+t\ep}-M)/a_\ep)_{t\in\mathbb R}\Rightarrow (\xi_t)_{t\in\mathbb R} \text{ as }\ep\downarrow 0,\end{equation}
where $\xi$ is defined in~\eqref{eq:xi}. 
Furthermore, the convergence in~\eqref{eq:converegence_thm} is mixing~\citep{renyi} in the sense that it is preserved when the left hand side is conditioned on an arbitrary event $B\in\mathcal F$ of positive probability.
\end{theorem}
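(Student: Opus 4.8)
The plan is to decompose the two-sided zoomed-in process into its behaviour before and after the supremum time $\tau$, to show that these two pieces converge jointly to the independent pair $(\widehat\p^\uparrow$-reversed$,\widehat\p^\downarrow)$ appearing in~\eqref{eq:xi}, and finally to upgrade ordinary weak convergence to mixing convergence. First I would invoke the Markov/splitting structure at the supremum: by the classical path decomposition of a \levy process at its maximum over a finite horizon (the analogue of~\citep{bertoin_splitting} used in \S\ref{sec:post_sup}), the pre-supremum path $(M-X_{(\tau-s)-})_{0\le s\le\tau}$ and the post-supremum path $(X_{\tau+s}-M)_{0\le s\le T-\tau}$ are conditionally independent given $(\tau,M)$, with the post-supremum piece being (a finite-horizon version of) the post-supremum process and the time-reversed pre-supremum piece being the corresponding post-infimum-type process for $X$. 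Under the rescaling $(X_{\tau+t\ep}-M)/a_\ep$ the horizon $T-\tau$ blows up like $(T-\tau)/\ep\to\infty$, so the finite-horizon post-supremum process is pushed to its full-horizon limit; this is exactly the regime in which $\p^\downarrow$ (respectively $\p^\uparrow$) is \emph{defined} as a limit of finite-time post-supremum (post-infimum) processes in \S\ref{sec:post_sup}.

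The heart of the argument is then an invariance principle. Zooming in on $X$, \eqref{eq:psi_convergence} gives $X^\epp\Rightarrow\X$ on the Skorokhod space, and $\X$ is not a compound Poisson process (it is self-similar and non-trivial), so Theorem~\ref{thm:convergence} applies: the laws of the post-supremum and post-infimum processes of $X^\epp$ converge weakly to $\widehat\p^\downarrow$ and $\widehat\p^\uparrow$ respectively, including the case where $\widehat\p^\downarrow$ puts mass on $(\dagger)_{t\ge0}$ (cases (b),(c) above). I would then argue that, for fixed $T$, the post-$\tau$ and time-reversed pre-$\tau$ pieces of $((X_{\tau+t\ep}-M)/a_\ep)$ are, up to an asymptotically negligible discrepancy, exactly the post-supremum and post-infimum processes of $X^\epp$ run for the rescaled horizons $(T-\tau)/\ep$ and $\tau/\ep$, which both tend to $\infty$ a.s. Here the two-sided-path / absorbing-state $\dagger$ conventions of \S\ref{sec:canonical} and the extended $J_1$ topology do the bookkeeping: the convention $\omega_t=0$ for $t<0$ before reversal, and $\omega_t=\dagger$ past the (rescaled) horizon, is precisely what makes the glued two-sided object converge to $\xi$ and handles the irregular-at-$0$ case where the limiting post-supremum path starts at a strictly negative level (the same subtlety addressed after Theorem~\ref{thm:convergence}). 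Joint convergence to the \emph{independent} pair follows from the conditional independence given $(\tau,M)$ together with the fact that the limiting laws $\widehat\p^\downarrow,\widehat\p^\uparrow$ do not depend on the conditioning values — one conditions on $(\tau,M)$, applies Theorem~\ref{thm:convergence} to each piece, and integrates out.

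Finally, for the mixing statement I would fix $B\in\mathcal F$ with $\p(B)>0$ and show that the conditional law of the left-hand side of~\eqref{eq:converegence_thm} given $B$ has the same limit $\xi$. The key point is asymptotic independence: as $\ep\downarrow0$ the zoomed-in process near $\tau$ depends only on the behaviour of $X$ in a shrinking random neighbourhood of $\tau$, whereas $B$ is a fixed event measurable with respect to the whole path. Concretely, for any $\d>0$ I would split on the events $\{\tau\in[0,\d]\}$, $\{\tau\in(\d,T-\d)\}$, $\{\tau\in[T-\d,T)\}$, and on the middle event use the strong Markov property at $\tau-\d$ together with the conditional independence at $\tau$ to show that the indicator $\mathbf 1_B$ becomes asymptotically independent of the rescaled window; the contributions of the two boundary events are controlled by taking $\d$ small, since $\tau$ has no atom at the endpoints of $[0,T)$ for a non-compound-Poisson \levy process. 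This is the standard route to stable/mixing convergence in the sense of~\citep{renyi}.

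The main obstacle I expect is the invariance-principle step: making rigorous that the \emph{finite-horizon} post-supremum and post-infimum excerpts of $X^\epp$ converge \emph{jointly} and to the \emph{full-horizon} limits $\widehat\p^\downarrow,\widehat\p^\uparrow$, uniformly enough that the gluing at the single point $(\tau,M)$ is continuous in the extended $J_1$ topology — in particular controlling the possible discontinuity of $\xi$ at $0$ in cases (b) and (c), and the case where $0$ is irregular for a half-line (so the limit jumps away from the supremum immediately). Everything else — Lamperti-type self-similarity of the limit, regular variation of $a_\ep$, the no-atom property of $\tau$ — is either already in the excerpt or routine.
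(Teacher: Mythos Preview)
Your overall structure is right and you correctly identify the main obstacle, but you are missing the paper's key device for resolving it. The paper does \emph{not} work directly with the deterministic horizon~$T$. Instead it first replaces $T$ by an independent exponential time $e_q$. The point is that then $X^\epp$ restricted to $[0,e_q/\ep)$ is a genuine killed \levy process (with killing rate $q\ep\to 0$), so Theorem~\ref{thm:convergence} applies verbatim, and moreover the Greenwood--Pitman splitting at the maximum gives \emph{unconditional} independence of the pre- and post-supremum pieces, each already having the law ${\p^\epp}^\uparrow$ or ${\p^\epp}^\downarrow$. Only after establishing~\eqref{eq:converegence_thm} and the mixing property for exponential horizons does the paper transfer to deterministic $T$: the Laplace transforms $q\int_0^\infty e^{-qt}\e(F^\epp_t\mid B)\,\D t$ converge, hence $\e(F^\epp_t\mid B)\to\e^* F^{(0)}$ for a.e.\ $t$, and a short coupling argument (comparing horizons $T$ and $T'$ close to $T$, using $\tau\neq T$ a.s.) upgrades this to all~$T$.

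Your direct approach runs into exactly the double-limit problem you flag: for deterministic $T$ the process $X^\epp$ on $[0,T/\ep)$ is \emph{not} a killed \levy process in the sense of \S\ref{sec:levy}, so Theorem~\ref{thm:convergence} does not apply as stated, and the finite-horizon splitting at the supremum gives only conditional independence with $\tau$-dependent conditional laws rather than the clean ${\p^\epp}^\downarrow$, ${\p^\epp}^\uparrow$. One could try to push through a version of Theorem~\ref{thm:convergence} for deterministically-truncated processes with horizon $\to\infty$, but that is new work; the exponential trick avoids it entirely. Separately, your mixing argument invokes ``the strong Markov property at $\tau-\d$'', but $\tau$ is not a stopping time, so this step is not well-posed; the paper instead exploits the Markov property of the post-supremum process $X^\downarrow$ at the small time $r\ep$ together with $\p^\downarrow_{x_\ep a_\ep}\Rightarrow\p^\downarrow$ from~\citep{chaumont}.
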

\begin{proof}
Note that $X$ can not be a compound Poisson process, because then the limit $\X\equiv 0$ is trivial for any function $a_\ep$. Restriction of $X$ to $[0,T)$ is achieved by putting $X_t=\dagger$ for all $t\notin[0,T)$. 
The main idea is to first consider, instead of a deterministic time horizon, an independent exponential time~$T=e_q$ of rate $q>0$. By doing so we obtain a killed \levy process, which satisfies~\eqref{eq:psi_convergence} with the same~$a_\ep$ and $\widehat\psi$, and hence the corresponding killed \levy process $X^\epp$ converges to the same~$\X$.
Observe that 
\[(X_{\tau+t\ep}-M)/a_\ep=X^\downarrow_{t\ep}/a_\ep={X^\epp}^\downarrow_t,\qquad t\geq 0\] is the post-supremum process corresponding to $X^\epp$, and so its law converges to $\widehat\p^\downarrow$ according to Theorem~\ref{thm:convergence}. 
Moreover, it is well known that the pre-supremum process 
\[-(X_{(\tau-t\ep)-}-M)/a_\ep,\qquad t\geq 0\] is independent of the post-supremum process and has the law of the post-infimum process, which follows from time reversal and splitting~\citep{greenwood_pitman}. Another application of Theorem~\ref{thm:convergence}, but for conditioning to stay positive, shows that the limit law is given by $\widehat\p^\uparrow$. Hence we have the joint convergence of post- and pre-supremum processes, which proves~\eqref{eq:converegence_thm} for a random $T=e_q$.
Moreover, when joining the one-sided paths we use the fact that either $\xi_0=0$ or~$\xi_{0-}=0$.

Next, we show that the convergence is mixing (for the exponential time horizon). According to splitting at the supremum and~\cite[Thm.\ 2]{renyi} it is sufficient to establish that
\[\left.\left(X^\downarrow_{t\ep}/a_\ep\right)_{t\geq 0}\right| A \Rightarrow \left(\X^\downarrow_{t}\right)_{t\geq 0}\qquad \text{ as }\ep\downarrow 0,\]
for any  $A\subset \sigma(X^\downarrow_{s_1},\ldots, X^\downarrow_{s_n})$ and any finite collection of times $0<s_1<\cdots<s_n$; and a similar result for the pre-supremum process. Furthermore, according to~\cite{whitt} it is equivalent to show the above weak convergence for restrictions to $t\in[0,r]$ for any $r>0$, since $\X^\downarrow$ is continuous at $r$ a.s. 
In other words, we aim to show that 
\begin{equation}\label{eq:toshow}\e^\downarrow\left(f\{(X_{t\ep}/a_\ep)_{t\in[0,r]}\} g\{X_{s_1},\ldots,X_{s_n}\}\right)\to\widehat \e^\downarrow f\{(X_t)_{t\in[0,r]}\}\e^\downarrow g\{X_{s_1},\ldots,X_{s_n}\}\end{equation}
for bounded continuous functions $f$ and $g$. Letting $\ep>0$ be so small that $r\ep<s_1$ we find by the Markov property of $X^\downarrow$ that the left hand side of \eqref{eq:toshow} is given by
\begin{align*}
&\int_{-\infty}^0\e^\downarrow\left(f\{(X_{t\ep}/a_\ep)_{t\in[0,r]}\};X_{r\ep}/a_\ep\in\D x\right)\e^\downarrow_{xa_\ep}g\{X_{s_1-r\ep},\ldots,X_{s_n-r\ep}\}\\
&=:\int_{-\infty}^0\mu_\ep(\D x)h_\ep(x).
\end{align*}
Similarly, the right hand side of~\eqref{eq:toshow} can be written as
\[\int_{-\infty}^0 \widehat\e^\downarrow\left(f\{(X_{t})_{t\in[0,r]}\};X_{r}\in\D x\right)\e^\downarrow g\{X_{s_1},\cdots,X_{s_n}\}=:\int_{-\infty}^0\mu(\D x)h.\]
As before, Theorem~\ref{thm:convergence} guarantees weak convergence of the finite measures: $\mu_\ep\Rightarrow\mu$. Thus it is left to show that for any $x_\ep\to x$ we have $h_\ep(x_\ep)\to h(x)=h$, which implies~\eqref{eq:toshow} in view of the Skorokhod's representation theorem, but see also~\cite[Thm.\ 3.4.4]{whitt_book}.
Finally, the same argument based on Skorokhod's representation theorem can be used to establish that $h_\ep(x_\ep)\to h$. Firstly, from \cite[Thm.\ 2]{chaumont} we find that $\p^\downarrow_{x_\ep a_\ep}\Rightarrow \p^\downarrow$, because  $a_\ep\to 0$. Secondly, the fact that $X^\downarrow$ does not jump at $s_1,\ldots,s_n$ shows convergence of the corresponding functionals. This concludes the proof for an independent exponential time horizon~$e_q$.

Finally, we extend the result to an arbitrary deterministic~$T>0$.
Consider a bounded continuous functional $f$ on the Skorokhod space of two-sided paths.
Let $F^\epp_T$ and $F^{(0)}$ denote $f$ applied to the paths on the left hand side of~\eqref{eq:converegence_thm} and the right hand side, respectively.
The first parts of the proof show that 
\[q\int_0^\infty \ee^{-qt}\e \left(\left.F^\epp_t\right|B\right)\D t\rightarrow \e^* F^{(0)}=q\int_0^\infty \ee^{-qt}\e^* F^{(0)}\D t,\] that is, the Laplace transforms in~$t$ converge, where $\e^*$ denotes the law of $\xi$ and $e_q$ is taken independent of~$B$. Hence $\e \left(\left.F^\epp_t\right|B\right)\rightarrow \e^* F^{(0)}$ for almost all~$t>0$, implying the corresponding weak convergence. 
If $X$ is such that 0 is regular for $(-\infty,0)$ then $\tau\neq T$ a.s. Thus with arbitrarily high probability we may choose small enough $\delta>0$ such that $T-\tau>2\delta$, and then for any $\ep$ the rescaled post-supremum processes corresponding to $T$ and $T'\in(T-\delta,T)$ coincide at least up to time $\delta/\ep$, which means that the respective Skorokhod distance tends to 0 as $\ep\downarrow 0$; whereas the corresponding pre-supremum processes are identical. It is left to choose $T'$ for which~\eqref{eq:converegence_thm} holds true, and to apply~\cite[Thm.\ 2.7 (iv)]{vanderVaart}.
If 0 is irregular for $(-\infty,0)$ then we use time reversal to translate our supremum problem into infimum problem, and observe that the infimum can  not be achieved at the end point~$T$.
\end{proof}

Let us provide some commentary with respect to Theorem~\ref{thm:zooming}. Assume for a moment that $X$ is such that $0$ is irregular for $(-\infty,0)$. According to Lemma~\ref{lem:regularity} if $X$ is in the domain of attraction of some non-trivial $\X$ then the latter is increasing, and the corresponding limiting post-supremum process is~$(\dagger)_{t\geq 0}$. Indeed, 
the post-supremum process of $X$ starts at a negative value or~$\dagger$, and upon zooming-in it must reduce to identically killed process; recall that $\dagger$ is assumed to be a point at~$\pm\infty$.
A very similar conclusion can be drawn about the case when $0$ is irregular for $(0,\infty)$.


Interestingly, the above behaviour can also be exhibited by a process $X$ for which 0 is regular for both half-lines, and so $X$ is continuous at~$\tau$. For example, consider a 1-stable process with $c_->c_+$, see \S\ref{sec:ex_stable}, in which case we may take $\X_t=t$.
In other words, the corresponding scaling function $a_\ep$ works fine for the pre-supremum process, but is decreasing too fast for the post-supremum process. It may be interesting to find an appropriate scaling function for the latter if such exists. 

Finally, let us show that mixing convergence in Theorem~\ref{thm:zooming} easily leads to further generalizations.
\begin{corollary}\label{cor:random}
The result of Theorem~\ref{thm:zooming} extends to an arbitrary random time interval $[\rho_1,\rho_2)$ and an event $B$, such that on $B$ it holds that $\rho_1<\rho_2<\infty$ and $\tau\notin\{\rho_1,\rho_2\}$. If $\rho_1$ is a stopping time then the latter condition can be weakened to $\tau\neq \rho_2$.
\end{corollary}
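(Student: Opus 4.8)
The plan is to reduce the random-interval statement to the deterministic one by discretizing the endpoints and exploiting the mixing property. First I would treat the case of a fixed (deterministic) interval $[s_1,s_2)$ with $s_1<s_2$: by Theorem~\ref{thm:zooming} applied to the process $(X_{s_1+t}-X_{s_1})_{t\geq 0}$ on $[0,s_2-s_1)$, together with the independence of this shifted process from $\mathcal F_{s_1}$ when $s_1$ is deterministic, the rescaled post-supremum and pre-supremum processes converge to $\xi$, and the mixing assertion of Theorem~\ref{thm:zooming} lets us condition on any $B\in\mathcal F$ of positive probability; note that on the event $\{\tau\notin[s_1,s_2)\}$ the supremum over $[s_1,s_2)$ and the global supremum over some fixed interval need not coincide, so I would phrase the whole argument in terms of the supremum of $X$ restricted to $[s_1,s_2)$, which is what Theorem~\ref{thm:zooming} already delivers.

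Next I would handle a general random interval $[\rho_1,\rho_2)$ with the event $B\subset\{\rho_1<\rho_2<\infty,\ \tau\notin\{\rho_1,\rho_2\}\}$ (here $\tau$ is the time of the supremum of $X$ restricted to $[\rho_1,\rho_2)$, and $\tau\notin\{\rho_1,\rho_2\}$ means the supremum is not attained at either endpoint). Fix $\delta>0$ and intersect $B$ with the event that $\rho_1,\rho_2$ both lie in $[0,N]$ and $\rho_2-\rho_1>\delta$; letting $N\to\infty$ and $\delta\downarrow 0$ this exhausts $B$ up to null sets, so it suffices to prove the convergence on each such truncated event of positive probability. On it, partition $[0,N]$ into a grid of mesh $\eta$ and consider the (finitely many) deterministic pairs $(s_1,s_2)$ of grid points. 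On the sub-event $B\cap\{\rho_1\in[s_1,s_1+\eta),\ \rho_2\in[s_2,s_2+\eta)\}$, for $\ep$ small the rescaled post-supremum process built from $[\rho_1,\rho_2)$ agrees with the one built from $[s_1,s_2)$ up to a time of order $\min(\rho_1-s_1,\dots)/\ep\to\infty$ (using $\tau\notin\{\rho_1,\rho_2\}$ to keep the maximizing time bounded away from the endpoints, exactly as in the final paragraph of the proof of Theorem~\ref{thm:zooming}), hence the Skorokhod distance between the two rescaled two-sided paths tends to $0$; on this sub-event the deterministic-interval case plus mixing (conditioning now on $B\cap\{\rho_1\in[s_1,s_1+\eta),\rho_2\in[s_2,s_2+\eta)\}\in\mathcal F$) gives the convergence, and summing over the finitely many grid pairs and then letting $\eta\downarrow 0$ yields it on the truncated $B$, and finally on all of $B$.

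For the last sentence, when $\rho_1$ is a stopping time the shifted process $(X_{\rho_1+t}-X_{\rho_1})_{t\geq 0}$ is again a L\'evy process independent of $\mathcal F_{\rho_1}$ by the strong Markov property, so $\rho_2$ may be taken as an $\mathcal F_{\rho_1}$-measurable (indeed arbitrary) later time and the only remaining constraint is that the supremum over $[\rho_1,\rho_2)$ not be attained at $\rho_2$ (attainment at $\rho_1$ is automatically excluded, or occurs with the limiting pre-supremum part degenerate, handled by the same regularity dichotomy as in Theorem~\ref{thm:zooming}); concretely one reruns the grid argument only on the $\rho_2$-coordinate, conditioning on $\mathcal F_{\rho_1}$. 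I expect the main obstacle to be the bookkeeping in the endpoint case $\tau=\rho_1$ when $\rho_1$ is merely a random time (not a stopping time): there the pre-supremum part can be trivial and one must be careful that the two-sided limit $\xi$ still arises with the correct (possibly one-sided, $\dagger$-valued) law — but this is precisely the regularity dichotomy already resolved in the proof of Theorem~\ref{thm:zooming} and in the commentary following it, so it should transfer verbatim.
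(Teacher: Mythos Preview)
Your proposal is correct and follows essentially the same route as the paper: discretize the random endpoints on a fine grid, reduce each cell to a deterministic interval via the endpoint-perturbation argument from the last paragraph of the proof of Theorem~\ref{thm:zooming}, and invoke the mixing property to condition on the relevant sub-events of~$B$. The paper's version is terser (it fixes a single mesh $\delta/2$ tied to the event $\tau\in(\rho_1+\delta,\rho_2-\delta)$ and dispenses with the stopping-time case as ``obvious''), whereas you spell out the strong Markov step and the one-sided grid on~$\rho_2$; your remark that $\rho_2$ may be taken $\mathcal F_{\rho_1}$-measurable is unnecessary since the grid argument already handles arbitrary~$\rho_2$, and your closing worry about $\tau=\rho_1$ for non-stopping $\rho_1$ is moot because that case is excluded by hypothesis.
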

\begin{proof}
We may choose $\delta>0$ so small that $\tau\in(\rho_1+\delta,\rho_2-\delta)$ with arbitrarily high probability, where $\tau$ is the time of the supremum of the process restricted to $[\rho_1,\rho_2)$. Using the argument from the last step in the proof of Theorem~\ref{thm:zooming} we find that the claimed result holds on the event $B$ jointly with $\rho_i\in[k_i\delta/2,(k_i+1)\delta/2)$ for some fixed integers $k_1,k_2$ (when it has positive probability) and the above condition on $\tau$. The rest is obvious.
\end{proof}

\subsection{Discretization error}
Next, using Theorem~\ref{thm:zooming} we derive a limit result for the discretization error $\Delta_\ep$ generalizing~\eqref{eq:AGP}, see \S\ref{sec:intro}. Another important ingredient is the old result of~\cite{kosulajeff} stating that the fractional part $\{\tau/\ep\}$ weakly converges to a uniform random variable as $\ep\downarrow 0$ for an arbitrary random variable $\tau$ possessing Lebesgue density.
\begin{theorem}\label{thm:main}
Let $U$ be an independent uniform $(0,1)$ random variable.
Under the conditions of Theorem~\ref{thm:zooming}, for a non-monotone $X$ it holds on the event $\tau\notin\{0,T\}$ that
\[(-\Delta_\ep/a_\ep,(\tau_\ep-\tau)/\ep)\Rightarrow (\max_{i\in\mathbb Z} \xi_{U+i},U+{\rm argmax}_{i\in\mathbb Z} \xi_{U+i})\qquad \ep\downarrow 0.\]
If $\X$ is decreasing or increasing then the limiting pair reduces to $(\X_U,U)$ or $-(\X_U,U)$, respectively. 
\end{theorem}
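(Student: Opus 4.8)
The plan is to derive this from the mixing convergence in Theorem~\ref{thm:zooming} together with Kosulajeff's theorem on the fractional part of $\tau/\ep$. First I would observe that the discretized maximum $M_\ep$ is attained at a grid point $i\ep$ near $\tau$, so that $\Delta_\ep = M - M_\ep = -\max_i (X_{i\ep} - M)$ where $i$ ranges over integers with $i\ep \in [0,T]$. Writing $i\ep = \tau + (i\ep - \tau) = \tau + \ep(i - \tau/\ep)$, and setting $j = i - \lceil \tau/\ep\rceil$ and $U_\ep = \lceil \tau/\ep\rceil - \tau/\ep = 1 - \{\tau/\ep\}$ (up to the usual care at the boundary), we get
\[
\Delta_\ep/a_\ep = -\max_{j} (X_{\tau + \ep(j + U_\ep)} - M)/a_\ep = -\max_j Y^{(\ep)}_{j+U_\ep},
\]
where $Y^{(\ep)}_t := (X_{\tau+t\ep}-M)/a_\ep$ is exactly the process appearing on the left of~\eqref{eq:converegence_thm}. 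Similarly $(\tau_\ep - \tau)/\ep = \ep^{-1}(i^*\ep - \tau) = j^* + U_\ep$ where $j^*$ is the maximizing index. So the statement is really a continuous-mapping claim: apply the functional $(\omega, u) \mapsto (\max_{i\in\mathbb Z}\omega_{u+i}, \,\mathrm{argmax}_{i\in\mathbb Z}\omega_{u+i})$ to the pair $(Y^{(\ep)}, U_\ep)$.

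The key steps, in order, are: (i) by Kosulajeff's result~\cite{kosulajeff}, since $\tau$ has a Lebesgue density on the event $\tau\notin\{0,T\}$ (this uses that $X$ is non-monotone, hence $\tau$ is a.s.\ a genuine, non-atomic time of a continuous-at-$\tau$ point), $\{\tau/\ep\} \Rightarrow U$ uniform on $(0,1)$, so $U_\ep \Rightarrow U$; (ii) $U_\ep$ is asymptotically independent of the rescaled excursion $Y^{(\ep)}$ around the supremum — this is precisely where the \emph{mixing} strengthening of Theorem~\ref{thm:zooming} is needed, since $U_\ep$ is $\mathcal F$-measurable but not continuous, so one conditions the convergence $Y^{(\ep)}\Rightarrow\xi$ on the events $\{U_\ep \in A\}$ and invokes mixing to get joint convergence $(Y^{(\ep)}, U_\ep) \Rightarrow (\xi, U)$ with $\xi \perp U$; (iii) apply the continuous mapping theorem to the functional above. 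For the degenerate cases: if $\X$ is decreasing then $\xi_t = \dagger\ind{t<0} + \X_t\ind{t\ge 0}$, so the only finite grid values are at $j\ge 0$, and since $\X$ is decreasing the supremum over $j+U$ with $j\ge 0$ is attained at $j=0$, giving $(\X_U, U)$; the increasing case is symmetric via $-\X$, attained at $j=-1$ shifted, giving $-(\X_U,U)$ after relabeling.

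The main obstacle is step (iii): the functional $\omega \mapsto (\max_i \omega_{U+i}, \mathrm{argmax}_i \omega_{U+i})$ is \emph{not} continuous on the full Skorokhod space, so one must show it is a.s.\ continuous at $(\xi, U)$. This requires two things. First, the maximum over $i\in\mathbb Z$ of $\xi_{U+i}$ is a.s.\ attained (and attained at a unique $i$): since $\xi$ is built from a self-similar oscillating $\X$ conditioned to stay negative on the right and the time-space-reversed conditioned-to-stay-positive part on the left, $\xi_t \to -\infty$ as $|t|\to\infty$ a.s.\ (the conditioned processes drift to $-\infty$), so only finitely many of the $\xi_{U+i}$ exceed any level; uniqueness of the argmax follows because the law of $\xi_{U+i} - \xi_{U+j}$ has no atoms for $i\neq j$ (continuity of distributions of increments of the self-similar process, using $U$ independent and the non-lattice structure). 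Second, one needs that the finitely-many relevant grid points $U_\ep + j$ converge to $U+j$ and that $\xi$ is a.s.\ continuous at each $U+j$ — true because $\xi$ has only countably many jumps and $U$ has a density independent of $\xi$. Assembling these, the functional is a.s.\ continuous at $(\xi,U)$, and the continuous mapping theorem delivers the claim. The boundary event $\tau\in\{0,T\}$ is excluded exactly so that Kosulajeff applies and so that there are grid points on both sides of $\tau$ for small $\ep$; this is handled as in the last step of the proof of Theorem~\ref{thm:zooming} via Corollary~\ref{cor:random}.
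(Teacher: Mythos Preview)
Your proposal is correct and follows essentially the same route as the paper: rewrite the discretized extremum in terms of the rescaled process $Y^{(\ep)}$ sampled at $U_\ep+\mathbb Z$, invoke Kosulajeff for $\{\tau/\ep\}\Rightarrow U$, use the mixing convergence of Theorem~\ref{thm:zooming} to obtain joint convergence $(Y^{(\ep)},U_\ep)\Rightarrow(\xi,U)$ with $\xi\perp U$, and finish by the continuous mapping theorem. You supply more justification than the paper does for the a.s.\ continuity of the argmax functional at $(\xi,U)$ (the paper simply asserts that the sampled sequence has a unique maximum and that $\xi$ is a.s.\ continuous at each $U+i$); your arguments for these points are sound, though the reference to Corollary~\ref{cor:random} at the end is unnecessary---the paper handles the boundary exclusion $\tau\notin\{0,T\}$ directly via Kosulajeff and the density result of Chaumont, not via that corollary.
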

\begin{proof}
Note that observing $X_t$ at the time instants $i\ep, i\in\mathbb Z$ corresponds to observing $X_{\tau+t\ep}$
at the time instants $\mathbb Z-\{\tau/\ep\}$. 
It is well known~\cite[Thm.\ 6]{chaumont_supremum} that the distribution of $\tau$ has a Lebesgue density on $(0,T)$ and possibly an atom at 0 or at $T$. According to~\cite{kosulajeff}, on the event $\tau\notin\{0,T\}$  we have that $\{\tau/\ep\}\Rightarrow U$.
Furthermore, the mixing convergence in Theorem~\ref{thm:zooming} shows that
\[\left(1-\{\tau/\ep\},((X_{\tau+t\ep}-M)/a_\ep)_{t\in\mathbb R}\right)\Rightarrow \left(U,(\xi_t)_{t\in\mathbb R}\right),\] 
where $U$ and $\xi$ are independent. 
Finally, note that  
$\xi_t$ observed at times $i+U,i\in\mathbb Z$ has a unique maximum.
Furthermore, $\xi$ is continuous at each of the observation instants a.s., and so the continuous mapping theorem completes the proof.
\end{proof}

As a consequence of Theorem~\ref{thm:main} the following can be said about the three cases of Theorem~\ref{thm:domains}.
\begin{itemize}
\item[(i)] If $\sigma>0$ then~\eqref{eq:AGP} holds true: choose $a_\ep=\sigma\sqrt\ep$ and observe that $\X$ is a standard Brownian motion, which implies that the law of $|\xi_t|$ for positive and negative times corresponds to the three-dimensional Bessel process. Moreover, the same limit can be obtained for a process with $\sigma=0$, but then the scaling function $a_\ep$ must satisfy $a_\ep/\sqrt \ep\to 0$, see~\S\ref{sec:ex_BM}.
\item[(ii)]
If $X$ is b.v.\ with $\gamma'\neq 0$ then 
\[\Delta_\ep/(|\gamma'|\ep)\Rightarrow U\qquad\text{ on the event }\tau\notin\{0,T\}.\]
The same limit law can be obtained when $X$ is, e.g., a 1-stable process with $c_+\neq c_-$, see \S\ref{sec:ex_stable}.
\item[(iii)]
A strictly $\a$-stable process $\X$ has two free parameters, one of which can be fixed by an appropriate choice of the scaling function~$a_\ep$. Alternatively, we may use the positivity parameter $\widehat\rho=\p(\X_1>0)$, so that all possible limits are parametrized by the pair~$(\a,\widehat\rho)$ in a certain domain. When $\a\in(0,1)$ we may have $\widehat\rho=0$ or $\widehat\rho=1$ corresponding to a monotone~$\X$.
\end{itemize}

According to Theorem~\ref{thm:lamperti} and Corollary~\ref{cor:BG}, in the case when $\sigma=0$ and $\gamma'=0$ if $X$ is b.v.\ the scaling function must satisfy $a_\ep\in\RV_{1/\beta_{BG}}$, where $\beta_{BG}$ is the corresponding Blumenthal-Getoor index, given that $X$ is in the domain of attraction of some non-trivial $\X$ which then must be $1/\beta_{BG}$-self-similar.

Finally, it is easy to see that the same weak limit as in Theorem~\ref{thm:main} is obtained for $((\underline M-\underline M_\ep)/a_\ep,(\underline\tau-\underline \tau_\ep)/\ep)$ on the event $\underline \tau\notin\{0,T\}$, where $\underline M,\underline \tau$ and $\underline M_\ep,\underline\tau_\ep$ are the infimum of $X$ on $[0,T)$ with its time and their discretized analogues, respectively.

\subsection{Further comments}

As mentioned in \S\ref{sec:intro}, there is quite some interest in the literature in determining the rate of convergence of the expected error $\e \Delta_\ep$ to~0. For example,
\cite{dia_lamberton2011} and~\cite{chen_thesis} showed, respectively, that $\e\Delta_\ep=\Oh(\sqrt \ep)$ if $\sigma>0$, and that $\e\Delta_\ep=\Oh(\ep^r)$ for $r<1/\beta_{BG}$ if $\sigma=0$ and the process is ub.v.
Our results provide a hint on the rate, but do not readily determine it. The reason is that proving uniform integrability of $\Delta_\ep/a_\ep$ seems to be a hard task in general. In some cases the representation of $\Delta_\ep$ based on Spitzer's identity, see~\citep[Eq.\ (3.3)]{asmussen_glynn_pitman1995}, may be useful. Furthermore, it is anticipated that uniform integrability does not hold when the attractor $\X$ is a strictly $\a$-stable \levy process with $\a<1$, which is clearly true when $\X$ is monotone and $\e |\X_U|=\infty$.

Finally, it is possible to apply our results to study the behaviour of $X$ around its first passage and last exit times, instead of the time of supremum. The key result here is the well known path decomposition of the \levy process at these times~\citep{duquesne}. For example, on the event of continuous last exit from some interval $(-\infty,x)$, the post-exit process is independent from the pre-exit process and the former has the law $\p^\uparrow$, whereas the latter when time-reversed has the original law (up to the last exit). Hence using the tools of this paper, and in particular Theorem~\ref{thm:convergence}, we may provide, e.g., a limit result for zooming in on $X$ at its last exit time.

\section*{Acknowledgements}
I am thankful to S{\o}ren Asmussen for his suggestion to reconsider~\citep{asmussen_glynn_pitman1995}. Furthermore, I would like to express my gratitude to Sebastian Engelke, Lanpeng Ji, Jan Pedersen, Mark Podolskij, Orimar Sauri, and two anonymous referees for pointing out various related works.


\appendix
\section{Proofs of the results from \S\ref{SEC:DOMAINS}}
\label{app:proofs}
This Appendix is devoted to proofs of the results from~\S\ref{SEC:DOMAINS}. These proofs make repeated use of Karamata's theorem and its Stieltjes-integral form variant, see~\cite[\S 1.5 and \S 1.6]{bingham_regular}. The corresponding result translated into the setting of regular variation at~0 is stated below, where it is assumed that the intervals of integration include left endpoints and exclude right endpoints.
\begin{theorem}[Karamata's Theorem]
Let $f:(0,\delta)\mapsto\mathbb R_+$ be a positive left-continuous function of bounded variation on compacts.
\begin{itemize}
\item If $f\in\RV_{-\rho}$ and $\varsigma+\rho>0$ then
\begin{equation}\label{eq:k1}\int_x^\delta y^{-\varsigma}\D f(y)/\{x^{-\varsigma} f(x)\}\to -\rho/(\varsigma+\rho).\end{equation}
If~\eqref{eq:k1} holds with $\varsigma+\rho>0$ and $\varsigma>0$ then $f\in\RV_{-\rho}$.
\item If $f\in\RV_{-\rho}$ and $\varsigma+\rho<0$ then
\begin{equation}\label{eq:k2}\int_{0-}^x y^{-\varsigma}\D f(y)/\{x^{-\varsigma} f(x)\}\to \rho/(\varsigma+\rho).\end{equation}
If~\eqref{eq:k2} holds with $\varsigma+\rho<0$ and $\varsigma\neq 0$ then $f\in\RV_{-\rho}$. 
\item If $f\in\RV_{\rho}$ with $\rho>0$ then
\begin{equation}\label{eq:k3}\int_{0}^x y^{-1}f(y)\D y/f(x)\to 1/\rho.\end{equation}
\end{itemize}
\end{theorem}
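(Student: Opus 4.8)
The plan is to reduce all three assertions to the classical Karamata theorems at infinity by the inversion $y\mapsto 1/y$. Put $F(t):=f(1/t)$, so that $F$ is a positive function of locally bounded variation on $(1/\delta,\infty)$ which lies in $\RV_{\rho}$ at infinity precisely when $f\in\RV_{-\rho}$ at $0$ (with the mirrored one-sided continuity convention). The change of variables $s=1/y$ in the Stieltjes integrals gives, up to the orientation sign,
\[\int_x^\delta y^{-\varsigma}\,\D f(y)=-\int_{1/\delta}^{1/x}s^{\varsigma}\,\D F(s),\qquad \int_{0-}^x y^{-\varsigma}\,\D f(y)=-\int_{1/x}^{\infty}s^{\varsigma}\,\D F(s),\]
together with $x^{-\varsigma}f(x)=(1/x)^{\varsigma}F(1/x)$; the multiplicative constants are most easily checked on the power $f(y)=y^{-\rho}$, where every integral is elementary. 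Writing $T=1/x\to\infty$, assertion~\eqref{eq:k1} becomes $\int_{1/\delta}^{T}s^{\varsigma}\,\D F(s)/\{T^{\varsigma}F(T)\}\to\rho/(\varsigma+\rho)$ for $\varsigma+\rho>0$, and~\eqref{eq:k2} becomes $\int_{T}^{\infty}s^{\varsigma}\,\D F(s)/\{T^{\varsigma}F(T)\}\to-\rho/(\varsigma+\rho)$ for $\varsigma+\rho<0$ (the condition $\varsigma+\rho<0$ being exactly what makes the tail integral finite). These are the direct halves of the Stieltjes form of Karamata's theorem, see~\cite[\S1.6]{bingham_regular}. For~\eqref{eq:k3} the same substitution, now with $\D y=-s^{-2}\D s$, turns the left-hand side into $\int_{T}^{\infty}s^{-1}F(s)\,\D s/F(T)\to1/\rho$ for $F\in\RV_{-\rho}$ at infinity, $\rho>0$, which is the plain Karamata theorem of~\cite[\S1.5]{bingham_regular}.

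For the converse statements in the first two items one invokes the converse parts of the (Stieltjes) Karamata theorem: if the displayed ratio tends to a finite limit and $\varsigma>0$ (respectively $\varsigma\neq0$), then $F$ and hence $f$ is regularly varying with the stated index. The hypothesis on $\varsigma$ is precisely what licenses the integration-by-parts (Abel summation) step recovering $F(T)$ from $\int^{T}s^{\varsigma}\,\D F(s)$, respectively from $\int_{T}^{\infty}s^{\varsigma}\,\D F(s)$; without it the integral need not determine $F$.

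The part requiring most care is the bookkeeping: tracking the endpoint at $0$, the global sign, and the one-sided continuity conventions under inversion, and checking that the constants coincide with those in~\cite[\S\S1.5--1.6]{bingham_regular}. The one genuine obstacle is that the cited converse Karamata results are usually phrased for monotone integrators; this is not restrictive for our purposes, since in every application of this theorem in the appendix the function $f$ is monotone (it is one of $\oPi_\pm$, $v$, or a comparable quantity), but in general one would first split $f$ into monotone pieces or argue directly from the uniform convergence / Potter bounds underlying the at-infinity statements. None of this is hard; the substance lies entirely in matching the present at-$0$ formulation to the standard at-$\infty$ ones.
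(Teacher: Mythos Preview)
Your proposal is correct and matches the paper's own treatment: the paper does not prove this theorem but simply states it as the translation to regular variation at~$0$ of the standard Karamata results at infinity, citing \cite[\S\S1.5--1.6]{bingham_regular}. Your explicit reduction via the inversion $y\mapsto 1/y$ is exactly the ``translation'' the paper alludes to, and your bookkeeping of signs and constants is accurate.
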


\begin{proof}[Proof of Theorem~\ref{thm:domains}]
The \levy triplet corresponding to the \levy exponent~$\psi^\epp$ of the rescaled process can be easily identified:
\begin{align*}
\gamma^\epp=\frac{\ep}{a_\ep}\left(\gamma-\int_{a_\ep\leq |x|<1}x\Pi(\D x)\right),\quad {\sigma^\epp}^2=\frac{\ep}{a_\ep^2}\sigma^2,\quad \Pi^\epp(\D x)=\ep\Pi(a_\ep\D x)
\end{align*}
for any $x>0$, assuming that $\ep$ is small enough so that~$a_\ep<1$. 
According to~\cite[Thm.\ 15.14]{kallenberg} the convergence in~\eqref{eq:psi_convergence} is equivalent to
\begin{align}
\label{eq:d} \gamma^\epp-\int_{u<|x|\leq 1}x\Pi^\epp(\D x)=\ep m(u a_\ep)/a_\ep &\to \widehat\gamma-\int_{u<|x|\leq 1}x\widehat \Pi(\D x),\\
\label{eq:sigma}{\sigma^\epp}^2+ \int_{|x|\leq u}x^2\Pi^\epp(\D x)=\ep v(u a_\ep)/a_\ep^2 &\to \widehat\sigma^2+\int_{|x|\leq u}x^2\widehat \Pi(\D x),\\
\label{eq:nu}\Pi^\epp &\vaguely\widehat \Pi
\end{align}
for some (and then for all) $u>0$, where $\int_{u<|x|\leq 1}=-\int_{1<|x|\leq u}$ for $u>1$, and the \levy measure converges vaguely on $[-\infty,0)\cup(0,\infty]$. 

\subsection*{Case (i)} In this case $\widehat\sigma>0,\widehat\gamma=0,\widehat \Pi=0$. Note that $v$ is non-negative and non-decreasing. From~\eqref{eq:sigma} we see that $v$ is necessarily positive and such that $\ep v(u a_\ep)/a_\ep^2\to\widehat \sigma$, where $a_\ep\in\RV_{1/2}$ according to Theorem~\ref{thm:lamperti}. Taking $\ep=1/n$ and noting that $a_{1/n}\sim a_{1/(n+1)}$ (by the uniform convergence theorem) we find according to~\cite[Thm.\ 1.10.3]{bingham_regular} that~$v$ is regularly varying. Since $v(ua_\ep)/v(a_\ep)\to 1$ it must be that $v\in\RV_0$; reference to the above theorem is necessary, because $a_\ep$ is not an arbitrary sequence. Moreover, it is sufficient to choose $a_\ep$ such that $v(a_\ep)/a_\ep^2\sim\widehat \sigma^2\ep^{-1}$, which is always possible according to~\cite[Thm.\ 1.5.12]{bingham_regular}. Furthermore, since 
$\int_x^\infty y^{-2}\D v(y)=\oPi(x)$, we find from~\eqref{eq:k1} that $v\in\RV_0$ is equivalent to $x^2\oPi(x)/v(x)\to 0$ as $x\downarrow 0$.

For sufficiency we need to show that $v\in \RV_0$ implies~\eqref{eq:nu} and~\eqref{eq:d} with $u=1$. 
Observe that
\[\Pi^\epp(\mathbb R\backslash [-x,x])=\ep \oPi(x a_\ep)=\frac{(xa_\ep)^2 \oPi(x a_\ep)}{v(x a_\ep)}\frac{v(x a_\ep)}{x^2v(a_\ep)}\frac{\ep v(a_\ep)}{a_\ep^2}\to 0,\] because the first term goes to 0 while the latter two have finite limits, which shows~$\Pi^\epp\vaguely 0$. Next, we show that $\ep m(a_\ep)/a_\ep\to 0$. Note that $\ep/a_\ep\to 0$ and so it is enough to establish that
\[\frac{\ep}{a_\ep}\int_{a_{\ep}\leq|x|<1} |x|\Pi(\D x)=\frac{\ep v(a_\ep)}{a_{\ep}^2}\frac{\int_{a_{\ep}<x<1} x^{-1}\D v(x)}{a_\ep^{-1}v(a_\ep)}\to 0, \]
but the first term has a finite limit and the second converges to~0 according to~\eqref{eq:k1}. 

\subsection*{Case (ii)} In this case $\widehat\gamma>0,\widehat \sigma=0,\widehat \Pi=0$. We have $\ep m(ua_\ep)/a_\ep\to \widehat\gamma$, but the function $m$ is not monotone in general. Nevertheless, for $v\in(1,\infty)$ and small enough $\ep$ we must have 
\begin{align*}&\sup_{u\in[1,v]}\frac{\ep}{a_\ep}|m(u a_\ep)-m(a_\ep)|\leq \sup_{u\in[1,v]}\frac{\ep}{a_\ep}\int_{a_\ep\leq |x|<ua_\ep}|x|\Pi(\D x)\\
&\leq\sup_{u\in[1,v]}u\ep\int_{1\leq |x|<u}\Pi(a_\ep\D x)\leq v \Pi^\epp((-v,1)\cup(1,v))\to 0.\end{align*}
This and a similar statement for $v\in(0,1)$ lead to the conclusion that 
\[\frac{\ep}{a_\ep}\frac{m(ua_\ep)}{\widehat\gamma}\to 1\qquad\text{ uniformly in }u\text{ on compact sets of }(0,\infty).\]
Since $a_\ep\in\RV_{1}$ we have $a_{1/n}\sim a_{1/(n+1)}$ showing, in particular, that $m(x)/\widehat\gamma$ is positive for all small~$x$. Thus $m(x)/\widehat\gamma\in\RV_0$ according to~\cite[1.9.3]{bingham_regular}, and we may choose $a_\ep$ as stated. 
Moreover,
\begin{equation}\label{eq:d_nu}\ep \oPi(xa_\ep)=\frac{xa_\ep \oPi(xa_\ep)}{m(xa_\ep)}\frac{m(xa_\ep)}{xm(a_\ep)}\frac{m(a_\ep)\ep}{a_\ep}\to 0
\end{equation}
showing that $xa_\ep \oPi(xa_\ep)/m(xa_\ep)\to 0$ uniformly in~$x$ on compact sets of~$(0,\infty)$. So we may conclude that $x\oPi(x)/m(x)\to 0$ as $x\downarrow 0$.
Let us now show that $x\oPi(x)/m(x)\to 0$ and the fact that $m(x)/\widehat\gamma$ is eventually positive imply that $m(x)/\widehat\gamma\in\RV_0$. Observe that $\D m(y)=y(\Pi(\D y)-\Pi(-\D y))$ and so
\[\frac{1}{x^{-1}m(x)}\int_x^{b} y^{-1}\D m(y)=\frac{x}{m(x)}(\Pi(x,b)-\Pi(-b,-x))\leq \frac{x\oPi(x)}{|m(x)|}\to 0\] establishing the claim in view of~\eqref{eq:k1}.

For sufficiency it is only left to show that $\ep v(a_\ep)/a_\ep^2\to 0$, where necessarily $\sigma=0$ in view of $\ep/a_\ep^2\to \infty$ or~\eqref{eq:types}. Hence we need to establish that
\begin{equation}\label{eq:d_toprove}\frac{\ep}{a_\ep^2}\int_0^{a_\ep}x^2\D \oPi(x)=\ep \oPi(a_\ep)-2\frac{\ep}{a_\ep^2}\int_0^{a_\ep}x \oPi(x)\D x\to 0,\end{equation}
where we relied on the fact that $x^2\oPi(x)\to 0$ which follows from $x\oPi(x)/m(x)\to 0$ and $xm(x)\to 0$ as $x\downarrow 0$.
But
\[\frac{\ep}{a_\ep^2}\int_0^{a_\ep}m(x)\D x=\frac{\ep m(a_\ep)}{a_\ep}\int_0^{a_\ep}m(x)\D x/(a_\ep m(a_\ep))\to \widehat\gamma,\]
because the second term converges to~1 according to~\eqref{eq:k3}.
From this and $x\oPi(x)/m(x)\to 0$, as well as~\eqref{eq:d_nu}, we find that~\eqref{eq:d_toprove} indeed holds true.

\subsection*{Case (iii)} In this case $\widehat \sigma=0$ and $\widehat \Pi(\D x)$ is given in~\eqref{eq:psi_stable}. Without loss of generality we assume that $\widehat c_+>0$. The necessity of (a) follows from the Convergence to Types Lemma~\ref{lem:types} and the results in (i) and~(ii).

Concerning (b) we find from~\eqref{eq:nu} that 
\begin{equation}\label{eq:rv_stable}\Pi^\epp(x,\infty)=\ep \oPi_+(xa_\ep)\to\frac{\widehat c_+}{\a}x^{-\a}=\widehat \Pi(x,\infty)\end{equation}
for all $x>0$,
together with the analogous statement for~$(-\infty,-x)$. 
Clearly $\oPi_+$ is monotone and positive for small arguments, otherwise~\eqref{eq:rv_stable} can not hold. Furthermore, $\oPi_+(xa_\ep)/\oPi_+(a_\ep)\to x^{-\a}$ and thus it must be that $\oPi_+\in RV_{-a}$, see~\cite[Thm.\ 1.10.3]{bingham_regular}. Similarly, $\oPi_-\in\RV_{-\a}$ if $\widehat c_->0$, and also $\oPi_-(xa_\ep)/\oPi_+(xa_\ep)\to \widehat c_-/\widehat c_+$ as $\ep\downarrow 0$. The latter convergence is uniform in $x$ on compact sets of $(0,\infty)$, which is inherited from the uniform convergence of $\oPi_\pm(xa_\ep)/\oPi_\pm(a_\ep)$. Since $a_{1/n}\sim a_{1/(n+1)}$, we must have that $\oPi_-(x)/\oPi_+(x)\to \widehat c_-/\widehat c_+$ as $x\downarrow 0$. Furthermore, 
we may always choose $a_\ep$ as stated, and in that case~\eqref{eq:nu} would follow from the conditions in~(b), which will be assumed in the following. 

With respect to~\eqref{eq:sigma} we find that indeed
\[\frac{\ep}{a_\ep^2} v(a_\ep)=-\frac{\ep}{a_\ep^2}\int_0^{a_\ep} x^2\D \oPi(x)\to \frac{\widehat c_++\widehat c_-}{2-\a}=\int_{|x|<1}x^2\widehat\Pi(\D x),\]
because $\ep \oPi(a_\ep)\to (\widehat c_++\widehat c_-)/\a$ and according to~\eqref{eq:k2} also
\[-\int_0^{a_\ep}x^2\D \oPi(x)/(a_\ep^2\oPi(a_\ep))\to\a/(2-\a).\]

For $\a\neq 1$ it is left to show~\eqref{eq:d} for $u=1$, i.e., that
\begin{equation}\label{eq:stable_toprove}\ep m(a_\ep)/a_\ep\to \widehat\gamma=\frac{\widehat c_+-\widehat c_-}{1-\a}.\end{equation}
If $\a\in(0,1)$ then $\gamma'=0$ and so
\[\frac{\ep}{a_\ep}m(a_\ep)=\frac{\ep}{a_\ep}\int_{|x|<a_\ep}x\Pi(\D x)=-\frac{\ep}{a_\ep}\int_0^{a_\ep}x\D (\oPi_+(x)-\oPi_-(x)).\]
If $\widehat c_\pm>0$ then~\eqref{eq:stable_toprove} follows from~\eqref{eq:k2} applied to $\oPi_\pm$ separately. If $\widehat c_-=0$ then we apply that result to $\oPi_+-\oPi_-\in\RV_{-\a}$ and note that $\ep(\oPi_+(a_\ep)-\oPi_-(a_\ep))\to \widehat c_+/\a$.
If $\a\in(1,2)$ then $\ep/a_\ep\to 0$ since $a_\ep\in \RV_{1/\a}$. Moreover, 
\[-\frac{\ep}{a_\ep}\int_{a_\ep\leq |x|<1}x\Pi(\D x)=\frac{\ep}{a_\ep}\int_{a_\ep}^1 x\D (\oPi_+(x)-\oPi_-(x))\to \frac{\widehat c_+-\widehat c_-}{1-\a},\] which follows similarly to the case $\a<1$, but using~\eqref{eq:k1}. Hence~\eqref{eq:stable_toprove} is established for $\a\neq 1$.

In the case of $\a=1$ the convergence in~\eqref{eq:d} does not always hold. But since $\ep \oPi_+(a_\ep)\sim \widehat c_+$ we must have
\begin{equation}\label{eq:last_toprove}\frac{\widehat c_+}{a_\ep \oPi_+(a_\ep)}\left(\gamma-\int_{a_\ep\leq |y|<1}y\Pi(\D y)\right)\to \widehat\gamma,\end{equation}
which shows~\eqref{eq:a_is_1} for a particular sequence $a_\ep$. It is left to show that this limit extends to an arbitrary sequence $x\downarrow 0$. Choose $n=n(x)$ to be the largest integer such that $x<a_{1/n}$. Thus $x\geq a_{1/(n+1)}$ and $n\to\infty$ as $x\downarrow 0$. Using monotonicity of various terms we find that the expression in~\eqref{eq:last_toprove} is bounded from above by
\[\frac{\widehat c_+}{a_{1/(n+1)} \oPi_+(a_{1/n})}\left(\gamma-\int_{a_{1/n}}^1 y\Pi(\D y)+\int_{a_{1/(n+1)}}^1 y\Pi(-\D y)\right)\to \widehat\gamma\]
for all large $n$, because $\oPi_+(a_{1/n})\sim \oPi_+(a_{1/(n+1)})$ and
\[\frac{1}{a_{1/(n+1)} \oPi_+(a_{1/(n+1)})}\int_{a_{1/(n+1)}}^{a_{1/n}}y\Pi(\D y)\downarrow 0.\]
A similar lower bound completes the proof.
\end{proof}

\begin{proof}[Proof of Corollary~\ref{cor:BG}]
The case (iii) of Theorem~\ref{thm:domains} is analyzed using standard arguments. For $\oPi\in\RV_{-\a}$ and any small $\delta>0$ we need to show that 
\[-\int_0^1x^{\a+\delta}\D \oPi(x)<\infty,\qquad -\int_0^1x^{\a-\delta}\D \oPi(x)=\infty.\] Convergence of the first integral follows from integration by parts and Potter's bounds. Divergence of the second integral follows from
\[-\int_{y}^1 x^{\a-\delta}\D \oPi(x)/(y^{\a-\delta}\oPi(y))\to \a/\delta\] which is a consequence of~\eqref{eq:k1}.

In case (i) of Theorem~\ref{thm:domains} we need to show that
\[\int_{|x|<1}|x|^{2-\delta}\Pi(\D x)=\int_0^1 x^{-\delta}\D v(x)=\infty\] for any $\delta>0$. Suppose the opposite. Then $V(y)=\int_y^1 x^{-\delta}\D v(x)$ must have a positive limit, and so $V\in\RV_0$. Now
\[-\int_0^x y^\delta\D V(y)=v(x)-v(0)=v(x),\]
because we assumed that $\sigma^2=0$. From~\eqref{eq:k2} it follows that $v(x)/x^\delta V(x)\to 0$ which can not be true since $v/V\in\RV_0$.

In case (ii) assume first that $X$ is b.v., and so $\beta_{BG}\leq 1$.
Define $M(x)=\int_0^x |y|\Pi(\D y)$ and note that $x\oPi(x)/M(x)\to 0$. In view of $\int_x^{\infty}y^{-1}\D M(y)=\oPi(x)$ and~\eqref{eq:k1} we find that $M(x)\in\RV_0$. Similarly to the case (i) we now see that $\int_0^1x^{-\delta}\D M(x)=\infty$ showing that $\beta_{BG}\geq 1-\delta$.
If $X$ is ub.v.\ then $\beta_{BG}\geq 1$ and we let $M(x)=\int_{x}^1 |y|\Pi(\D y)$, which again must be $\RV_0$. But then clearly $-\int_0^1 x^{\delta}\D M(x)<\infty$ showing that $\beta_{BG}\leq 1+\delta$.
\end{proof}

\section{An extension of the law of large numbers}\label{app:classical}
Reconsider~\eqref{eq:sums} for a constant non-zero limit:
\begin{equation}\label{eq:classical_d}
\sum_{i=1}^n\zeta_i/a_n\to\widehat\gamma\neq 0,\qquad n,a_n\to\infty,
\end{equation}
where $\zeta_i$ are i.i.d.\ and convergence is in probability. In order to have a complete picture with respect to zooming out on random walks, see~\S\ref{sec:zooming_out}, we need to find necessary and sufficient conditions for the convergence in~\eqref{eq:classical_d}.
The interesting part, of course, concerns the cases $\e |\zeta_1|=\infty$ and $\e \zeta_1=0$, because otherwise we may simply take $a_n$ proportional to $n$ and apply the law of large numbers. 
For positive $\zeta_1$ this problem is solved by~\cite[Thm.\ VII.7.3]{feller}, whereas the general case is not readily available in the standard textbooks.
Similarly to Theorem~\ref{thm:domains} (ii) one can establish the following result, which complements~\cite[Thm.\ 3.1]{shimura} characterizing the strict domain of attraction of a strictly 1-stable distribution, see also (3.4) therein.
\begin{proposition}
Let $m(x)=\e(\zeta_1;|\zeta_1|\leq x)$. Then~\eqref{eq:classical_d} holds true if and only if $m(x)/\widehat\gamma$ is eventually positive  and $x\p(|\zeta_1|>x)/m(x)\to 0$ as $x\to\infty$, in which case $a_n/m(a_n)\sim n/\widehat\gamma$. 
\end{proposition}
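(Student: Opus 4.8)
The plan is to imitate the proof of Theorem~\ref{thm:domains}(ii) from Appendix~\ref{app:proofs}, translating the L\'evy--Khintchine data into random-walk data: the tail $\oPi$ is replaced by $\overline F(x):=\p(|\zeta_1|>x)=\overline F_+(x)+\overline F_-(x)$ with $\overline F_\pm(x)=\p(\pm\zeta_1>x)$, the truncated mean of the L\'evy measure by $m(x)=\e(\zeta_1;|\zeta_1|\le x)$, and the equivalent conditions~\eqref{eq:d}--\eqref{eq:nu} of Kallenberg by the classical criterion for row-wise convergence of i.i.d.\ triangular arrays to an infinitely divisible law, here the degenerate law $\delta_{\widehat\gamma}$; see~\cite[Ch.~15]{kallenberg} or~\cite[Ch.~VII]{gnedenko_kolmogorov}. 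Concretely, for $a_n\to\infty$ the convergence~\eqref{eq:classical_d} is equivalent to the three requirements: (I) $n\overline F(\ep a_n)\to 0$ for every $\ep>0$; (II) $\tfrac{n}{a_n^2}\e(\zeta_1^2;|\zeta_1|\le a_n)\to 0$; and (III) $\tfrac{n}{a_n}m(a_n)\to\widehat\gamma$. No separate centering appears precisely because the limit is degenerate, mirroring the fact that in Theorem~\ref{thm:domains}(ii) only $\gamma$ and no extra drift enters. Karamata's theorem, in the Stieltjes form recalled in Appendix~\ref{app:proofs}, will be used via $\D m(y)=y(\mu_+(\D y)-\mu_-(\D y))$, where $\mu_\pm$ are the images of $\p(\,\cdot\,;\,\pm\zeta_1>0)$ under $|\zeta_1|$; then $\int_x^\infty y^{-1}\,\D m(y)=\overline F_+(x)-\overline F_-(x)$, so that $|\int_x^\infty y^{-1}\,\D m(y)|\le\overline F(x)$, exactly as in the last display of the proof of Theorem~\ref{thm:domains}(ii).

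For necessity, (III) shows at once that $m(a_n)$ has the sign of $\widehat\gamma$ for large $n$ and that $a_n/m(a_n)\sim n/\widehat\gamma$, while (I) and (III) together give $a_n\overline F(a_n)/m(a_n)=n\overline F(a_n)\big/\{\tfrac{n}{a_n}m(a_n)\}\to 0$, i.e.\ the tail condition along the subsequence $(a_n)$. To promote this to all $x\to\infty$ one repeats the argument of Theorem~\ref{thm:domains}(ii): from $|m(ua_n)-m(a_n)|\le\max(u,1)\,a_n\overline F(\min(u,1)a_n)$ one obtains $\sup_{u\in[\d,v]}\tfrac{n}{a_n}|m(ua_n)-m(a_n)|\to 0$, hence $\tfrac{n}{a_n}m(ua_n)/\widehat\gamma\to 1$ uniformly for $u$ in compact subsets of $(0,\infty)$; since the limit process $t\mapsto\widehat\gamma t$ is $1$-self-similar, the Convergence to Types Lemma~\ref{lem:types} and the zooming-out theorem of Lamperti (\S\ref{sec:zooming_out}, \cite[Thm.~16.14]{kallenberg}, Remark~\ref{rem:trivial_degenerate}) force $a_n\in\RV_1$, so $a_{n+1}\sim a_n$; interpolating between $a_n$ and $a_{n+1}$ then yields $m/\widehat\gamma\in\RV_0$ and, via the analogue of~\eqref{eq:d_nu}, that $x\overline F(x)/m(x)\to 0$ as $x\to\infty$. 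For sufficiency, one first notes---by the converse part of Karamata's theorem applied as above, exactly as in Theorem~\ref{thm:domains}(ii)---that the two stated conditions force $m/\widehat\gamma\in\RV_0$, so $a_n$ may be chosen with $a_n/m(a_n)\sim n/\widehat\gamma$, which is (III) and entails $a_n\in\RV_1$. It remains to verify (I) and (II). For (I), $n\overline F(xa_n)=\tfrac{xa_n\overline F(xa_n)}{m(xa_n)}\cdot\tfrac{m(xa_n)}{x\,m(a_n)}\cdot\tfrac{n\,m(a_n)}{a_n}\to 0\cdot x^{-1}\cdot\widehat\gamma=0$ for each $x>0$. For (II), integration by parts gives $\tfrac{n}{a_n^2}\e(\zeta_1^2;|\zeta_1|\le a_n)=-n\overline F(a_n)+\tfrac{2n}{a_n^2}\int_0^{a_n}x\overline F(x)\,\D x$; the first term vanishes by (I), and in the second one uses $x\overline F(x)/m(x)\to 0$ together with $\tfrac{n}{a_n^2}\int_0^{a_n}m(x)\,\D x\to\widehat\gamma$ (Karamata, cf.~\eqref{eq:k3} applied to $x\mapsto xm(x)\in\RV_1$) to conclude it vanishes as well; this parallels~\eqref{eq:d_toprove}, and is in fact slightly simpler since $t^2\overline F(t)\to 0$ as $t\to 0$ is automatic.

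The main obstacle is the passage from the subsequence $(a_n)$ to all $x\to\infty$---equivalently, establishing $m/\widehat\gamma\in\RV_0$---because a priori $(a_n)$ need not be regularly varying nor its consecutive ratios tend to $1$. In the L\'evy case this is handled by invoking $a_\ep\in\RV_1$ from Theorem~\ref{thm:lamperti}; here the input is Lamperti's theorem in its zooming-out form, which applies since~\eqref{eq:classical_d}, with its non-trivial (if degenerate) limit, is of the form~\eqref{eq:sums}. A self-contained substitute is available: the block decomposition $S_{kn}=\sum_{j=0}^{k-1}(S_{(j+1)n}-S_{jn})$ shows $S_{kn}/a_n\to k\widehat\gamma$ in probability, whence $a_{kn}/a_n\to k$ for every $k\in\mathbb N$, which together with the uniform convergence above and a standard sequential regular-variation argument (cf.~\cite[\S1.9]{bingham_regular}) yields $a_n\in\RV_1$. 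The remaining bookkeeping---left-continuity and local bounded variation of $m$ so that the Stieltjes form of Karamata's theorem applies, and keeping track of the two one-sided tails $\overline F_\pm$---is routine and runs parallel to the treatment of the strictly $1$-stable case in~\cite[Thm.~3.1]{shimura}.
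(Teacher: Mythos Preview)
Your proposal is correct and follows essentially the same approach as the paper. The paper's proof is only a sketch: it cites \cite[Thm.\ 15.28]{kallenberg} for the three equivalent conditions (vague convergence of $n\p(\zeta_1\in a_n\,\D x)$ to~$0$, vanishing of the rescaled truncated variance, and convergence of the rescaled truncated mean to~$\widehat\gamma$) and then refers the reader to Case~(ii) of Appendix~\ref{app:proofs}. Your conditions (I)--(III) are equivalent to these (the second-moment and variance versions differ by $n\,m(a_n)^2/a_n^2\to\widehat\gamma\cdot 0$), and your necessity/sufficiency arguments are a faithful transcription of Case~(ii) with $\oPi$ replaced by $\overline F$ and $m$ now at infinity. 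The one point on which you are more careful than the paper is the justification that $a_n\in\RV_1$, needed to pass from the subsequence $(a_n)$ to all $x\to\infty$; the paper's sketch leaves this implicit, whereas you correctly obtain it either from Lamperti's classical zooming-out theorem together with Remark~\ref{rem:trivial_degenerate}, or from the block argument $S_{kn}/a_n\to k\widehat\gamma$.
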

\begin{proof}[Proof Sketch]
According to~\cite[Thm.\ 15.28]{kallenberg} the convergence in~\eqref{eq:classical_d} holds if and only if
\[n\p(\zeta_1 \in a_n\D x)\vaguely 0,\quad n\VAR(\zeta_1;|\zeta_1|\leq u a_n)/a_n^2\to 0,\quad n\e(\zeta_1;|\zeta_1|\leq u a_n )/a_n\to \widehat\gamma\]
for some (and then for all) $u>0$.
The rest of the proof is somewhat similar to Case (ii) in Appendix~\ref{app:proofs}.
\end{proof}

Assume that $\e \zeta_1=\pm\infty$ then $m(x)\to \pm\infty$ and thus $a_n/n\to \infty$, i.e., the scaling should be faster than linear. Hence if~\eqref{eq:classical_d} holds then $\zeta_i$ can be replaced by $\zeta_i-d$ for any $d\in\mathbb \R$ without changing the limit result. In other words, shifting is irrelevant in this case. 
An example is given by the Pareto distribution with shape~$1$: $\p(\zeta_1\in\D x)=x^{-2}\D x$ for $x>1$, where $m(x)=\log x$. 

\section{Proof of the invariance principle}\label{app:convergence}

\begin{proof}[Proof of Theorem~\ref{thm:convergence}]
The proof consists of three steps, where in steps (ii) and (iii) we use some particular representations of the laws $\p^\downarrow_x$ and $\p^\downarrow$ avoiding double limits. In the following we define some quantities for the process~$X$ and assume that the analogous quantities are defined for each $\Xn$ without explicitly writing them.

(i) Consider the (weak) ascending ladder processes $(L^{-1},H)$, where $L^{-1}$ denotes the inverse local time at the supremum and $H_t=X_{L_t^{-1}}$. The corresponding Laplace exponent is denoted by $k(\a,\beta)$ and normalized so that $k(1,0)=1$, see~\cite[Ch.\ VI]{bertoin} or~\cite[Ch.\ 6]{kyprianou}. 
By the continuous mapping theorem we get convergence of the Wiener-Hopf factors in~\cite[Thm.\ 6.16(ii)]{kyprianou}, which then implies convergence of the bivariate exponents $k^{(n)}(\a,\b)\rightarrow k(\a,\b)$ (and hence also weak convergence of the ladder processes).
It is noted that in the above textbooks the results are formulated for non-killed \levy processes, but they extend to killed \levy processes in a straightforward way. 

(ii) The following representation of the semigroup of the conditioned process is standard~\citep{chaumont}:
\begin{equation}\label{eq:doob_h}
p^\downarrow_{t}(x,\D y)=\frac{m(-y)}{m(-x)}\p_{x}(X_t\in\D y,\overline X_t<0),\qquad x<0,
\end{equation}
where $\overline X_t=\sup_{s\leq t}X_s$ and $m(r)=\e \int_0^\infty\ind{H_t<r}\D t$
is a finite, increasing function on $(0,\infty)$. Since we assumed that $X$ is not a compound Poisson process, the function $m$ is continuous and $\p_x(\overline X_t=0)=0$ for $x<0$. Hence we have
\[\pn_{x}(X_t\in\D y,\overline X_t<0)\Rightarrow\p_{x}(X_t\in\D y,\overline X_t<0).\]
It is well-known and easy to see that $\int_{[0,\infty)} e^{-\b x} \D m(x)=1/k(0,\b)$ for $\b>0$, because $\D m(x)$ is the potential measure of the ladder height process. Thus according to step (i) the Laplace transform of $\D h^{(n)}(x)$ converges to that of $\D m(x)$ for all $\beta>0$, and so the corresponding cumulative distribution functions converge: $m^{(n)}(x)\rightarrow m(x)$, because the latter is continuous~\cite[Thm.\ XIII.1.2a]{feller}.
We have established convergence of the semigroup given in~\eqref{eq:doob_h}, and so according to~\cite[Thm.\ 4.2.5]{ethier_kurtz} we obtain
\[\pn_x^\downarrow\Rightarrow\p_x^\downarrow,\qquad \text{for }x<0\] 
because the corresponding processes are Feller and the initial distributions coincide, see also~\cite[Lem.\ 4.2.3]{ethier_kurtz} concerning the one-point compactification of~$\R$.

(iii) Finally, we recall~\citep[Thm.\ 1]{chaumont} that $\p^\downarrow$ is also the law of the post-supremum process under $\p_x^\downarrow$ for any $x<0$. Under the latter law the time of the supremum is finite and unique, and so we can apply the continuous mapping theorem to establish that
\[\pn^\downarrow\Rightarrow\p^\downarrow.\]
The respective map is continuous at any $\omega$ such that the time of supremum of $(\omega_t)_{t\geq 0}$ is finite and unique. Indeed, for a sequence $\omega^{(n)}$ converging to $\omega$ the corresponding suprema and their (last) times will converge. Then it is easy to see that the post-supremum processes converge in Skorokhod topology given that the initial evolution of paths can be matched. The latter follows from the assumption that $\omega_{0-}=\omega_{0-}^{(n)}=0$ allowing to deal with the case when the post-supremum process starts at a negative value. The proof is complete.
\end{proof}

\end{document}